\DeclareSymbolFontAlphabet{\mathbb}{AMSb}
\DeclareSymbolFontAlphabet{\mathbbl}{bbold}
\def\KK{{\mathbbl{k}}} %%% lowercase \mathbb (see previous block on mathbbol)
\def\N{{\mathbb{N}}}  %%% N
\def\R{{\mathbb{R}}}  %%% R
\def\C{{\mathbb{C}}}  %%% C
\def\LL{{\mathbb{L}}}  %%% L
\def\A{{\mathcal{A}}} %%% A - Spazio Affine
\def\E{{\mathcal{E}}} %%% E - Fascio coerente
\def\F{{\mathcal{F}}} %%% F - Fascio
\def\GG{{\mathbb{G}}} %%% G - gruppo algebrico
\def\O{{\mathcal{O}}} %%% O - Fascio di struttura
\def\S{{\mathcal S}} %%% Stack
\def\X{{\mathcal X}} %%% Stack
\def\M{{\mathcal M}} %%% Stack of curves
\def\N{{\mathcal N}} %%% Stack of curves and coherent sheaves
\let\LL\relax
\def\LL{{\mathbb L}}
\DeclareMathOperator{\Spec}{Spec}
\DeclareMathOperator{\Sym}{Sym}
\DeclareMathOperator{\SCS}{\S_{CS}}
\DeclareMathOperator{\Ker}{Ker}
\DeclareMathOperator{\id}{id}
\DeclareMathOperator{\Qcoh}{Qcoh}
\DeclareMathOperator{\Coh}{Coh}
\DeclareMathOperator{\CohM}{\mathcal C oh_\M}
\DeclareMathOperator{\VecM}{\mathcal V ec_\M}
\let\H\undefined
\DeclareMathOperator{\H}{H}
\DeclareMathOperator{\h}{h}
\DeclareMathOperator{\Hom}{Hom}
\DeclareMathOperator{\Aut}{Aut}
\DeclareMathOperator{\D}{D}
\DeclareMathOperator{\obj}{obj}
\DeclareMathOperator{\Ext}{Ext}
\DeclareMathOperator{\Left}{L}
\DeclareMathOperator{\BGL}{BGL}
\DeclareMathOperator{\GL}{GL}
\newtheorem{thm}{Theorem}[section]
\newtheorem{prop}[thm]{Proposition}
\newtheorem{lemma}[thm]{Lemma}
\newtheorem{coro}[thm]{Corollary}
\theoremstyle{definition}
\newtheorem{defi}[thm]{Definition}
\newtheorem{notation}[thm]{Notation}
\theoremstyle{remark}
\newenvironment{rem}
  {\pushQED{\qed}\remx}
  {\popQED\endremx}
\newtheoremstyle{TheoremNum}
        {\topsep}{\topsep}              %%% space between body and thm
        {\itshape}                      %%% Thm body font
        {}                              %%% Indent amount (empty = no indent)
        {\bfseries}                     %%% Thm head font
        {.}                             %%% Punctuation after thm head
        { }                             %%% Space after thm head
        {\thmname{#1}\thmnote{ \bfseries #3}}%%% Thm head spec
\theoremstyle{TheoremNum}
\newtheorem{thmn}{Theorem}
\begin{document}

\title{A perfect obstruction theory for moduli of coherent systems}
\author{Giorgio Scattareggia}
\date{}

\maketitle

\section*{Abstract}
Let $C$ be a curve of genus $g$. A \textit{coherent system} on $C$ is a pair $(E,V)$, where $E$ is a finite rank vector bundle on $C$ and $V$ is a linear subspace of the space of global sections of $E$. The \textit{type} of a coherent system $(E,V)$ is a triple $(n,d,k)$, where $n$ is the rank of $E$, $d$ is the degree of $E$ and $k$ is the dimension of $V$. The notion of stability for a coherent system $(E,V)$ differs from the stability of the bundle $E$ and depends on the choice of a real parameter $\alpha$. The moduli space of $\alpha$-stable coherent systems of type $(n,d,k)$ has an expected dimension $\beta = \beta(n,d,k)$ which depends on the genus of the curve $C$ and on the type of the coherent systems.

We construct a perfect obstruction theory for the moduli spaces of $\alpha$-stable coherent systems which has rank equal to the expected dimension $\beta$. In our construction we do not fix one curve, but we work on families of projective Gorenstein curves.

\section*{Introduction}

In the early 1990s Le Potier et al introduced the definition of coherent systems, shortly $CS$, in order to generalize the classical notion of linear series for higher rank vector bundles \cite{LePotier}, \cite{Bertram}, \cite{raghavendra}. A coherent system on a smooth curve $C$ is a pair $(E,V)$, where $E$ is a finite rank vector bundle on $C$ and $V$ is a subspace of the vector space of global sections of $E$. We say that $(E,V)$ is a coherent system of type $(n, d, k)$ if $n$ is the rank of $E$, $d$ is the degree of $E$ and $k$ is the dimension of $V$; a coherent system of type $(1,d,k)$ is a linear series $g^{k-1}_d$.

In 1995 King and Newstead introduced a notion of (semi-)stability for coherent systems, which depends on the choice of a real parameter $\alpha$, and they constructed the moduli spaces of $\alpha$-stable coherent systems as GIT quotients \cite{KingNew}. The choice of the parameter $\alpha$ is equivalent to the choice of a GIT linearization.

In 1998 He proved that every moduli space of $\alpha$-stable coherent systems of type $(n,d,k)$ on a smooth genus $g$ curve has expected dimension $\beta = \beta(n,d,k) =  n^2(g - 1) + 1 - k(k - d + n(g - 1))$ \cite{He}. The integer $\beta$ is called the Brill Noether number and it reduces to the usual Brill Noether number defined for linear series, $\rho = \beta(1,d,k) = g - k(g - d+(k-1))$, if $n=1$ \cite[2.7]{CohBrNoet}.

In this paper we construct a perfect obstruction theory (in the sense of \cite{BFantechi}) for the moduli spaces of $\alpha$-stable coherent systems which justifies their expected dimension.

Since only some of the hypotheses in the definition of coherent systems are relevant for this construction, we work in a more general setting. Indeed, we introduce the notion of generalized coherent systems, shortly $GCS$, which relaxes some of the properties of a coherent system. A $GCS$ on a projective curve $C$ is a triple $(F, E, \varphi)$ where $F \in \Coh(C)$, $E$ is a finite rank vector bundle on $C$ and $\varphi : F \rightarrow E$ is a morphism of sheaves. Every coherent system $(E, V)$ on a smooth curve $C$ naturally induces a generalized coherent system $(V \otimes \O_C, E, \varphi)$ on $C$, where the map $\varphi : V \otimes \O_C \rightarrow E$ is determined by the injection $V \subseteq \H^0(C, E)$. We fix a flat family of Gorenstein projective curves over an algebraic stack $\M$ and we construct the moduli stack $\S$ of families of $GCS$ on curves in $\M$. Then we prove that $\S$ is algebraic in the sense of Artin. We also point out that $\S$ comes with a natural representable forgetful morphism $G : \S \rightarrow \N$, where $\N := \CohM \times_\M \VecM$ is the moduli stack of pairs $(F,E)$.

The central result of this paper is the construction of a perfect relative obstruction theory for the forgetful morphism $G : \S \rightarrow \N$.
\begin{thmn}[\ref{PobTh}]
There is a  canonical morphism
\[
E^\bullet \longrightarrow \tau_{\geq -1}\Left_G
\]
which is a perfect relative obstruction theory for the forgetful morphism $G : \S \rightarrow \N$.
\end{thmn}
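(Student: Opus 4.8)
The plan is to realize $G$ as the relative moduli of morphisms and to read off the obstruction theory directly from the universal morphism. Let $\pi : \CC \to \S$ denote the universal Gorenstein curve (pulled back from $\M$), and let $\F,\E$ be the universal source and target sheaves on $\CC$, with $\E$ locally free and $\F$ flat over $\S$, together with the universal morphism $\Phi : \F \to \E$. Since $G$ forgets $\Phi$, its relative deformation theory is exactly that of the single morphism $\Phi$ with $\F$ and $\E$ held fixed, which fibrewise is governed by the groups $\Ext^\bullet_C(F,E)$. This dictates the candidate
\[
\K := R\pi_* R\HHom_\CC(\F,\E), \qquad E^\bullet := \K^\vee .
\]
The first step is to fix the Cartesian square relating the universal curves over $\S$ and over $\N$ and to record the universal data, so that $\K$ is the base change to $\S$ of the analogous complex on $\N$.

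Next I would show $E^\bullet$ is perfect of amplitude $[-1,0]$. Because the fibres are Gorenstein of dimension one and $\E$ is locally free, $R\HHom_\CC(\F,\E)$ is a perfect complex concentrated in degrees $[0,1]$: the local $\Ext$ sheaves $\mathcal{E}xt^q(\F,\E)$ vanish for $q\geq 2$ since $\O_\CC$ has injective dimension $1$ along the fibres, while $\mathcal{E}xt^1(\F,\E)$ is supported in relative dimension $0$. Pushing forward, the local-to-global spectral sequence together with the vanishing of $R^1\pi_*$ on the relatively zero-dimensional sheaf $\mathcal{E}xt^1(\F,\E)$ forces $\Ext^i_C(F,E)=0$ for $i\geq 2$ fibrewise. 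By cohomology and base change for perfect complexes (valid here by flatness of $\F$ and properness of $\pi$), $\K$ is perfect of amplitude $[0,1]$, so $E^\bullet=\K^\vee$ has amplitude $[-1,0]$. The Gorenstein hypothesis further lets me rewrite $E^\bullet$ via relative Serre duality with the dualizing line bundle $\omega_\pi$, which is convenient when matching the rank to the expected dimension $\beta$.

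The heart of the argument is the construction of the canonical morphism $E^\bullet \to \tau_{\geq -1}\Left_G$ and the verification that it is an obstruction theory. I would obtain it from the relative Atiyah class of the universal morphism $\Phi$: differentiating $\Phi$ along $\S$ relative to $\N$ produces a canonical morphism $E^\bullet \to \Left_G$ which, by the amplitude of $E^\bullet$, factors through $\tau_{\geq -1}\Left_G$. Conceptually this is the statement that, over $\N$, the stack $\S$ is the classical truncation of the derived linear stack $\VV(\K^\vee)$ attached to the two-term complex $\K$, whose relative cotangent complex is precisely $E^\bullet$; the truncation $\tau_{\geq -1}$ is exactly what is visible on the underlying classical stack. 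The obstruction-theory axioms then follow from the deformation comparison: for a square-zero extension over $\N$ with ideal $\mathcal I$, obstructions to lifting $\Phi$ lie in $\Ext^1_C(F,E\otimes\mathcal I)=\H^1(\K\otimes^L\mathcal I)$ and first-order deformations in $\Hom_C(F,E\otimes\mathcal I)=\H^0(\K\otimes^L\mathcal I)$, so that $h^0$ of the map is an isomorphism and $h^{-1}$ is surjective.

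The main obstacle is this third step: making the comparison map intrinsic and functorial rather than merely matching abstract tangent and obstruction spaces. I expect the real work to lie in (i) the precise formulation and naturality of the relative Atiyah class — equivalently, the identification of $\S$ with the truncated linear stack $\VV(\K^\vee)$ together with the computation of its relative cotangent complex — and (ii) ensuring cohomology and base change hold uniformly over the Artin stack $\N$, so that the deformation–obstruction comparison is genuinely compatible with the morphism of complexes and not just an isomorphism of fibrewise vector spaces. By contrast, the amplitude estimate and the vanishing of the higher $\Ext$ sheaves are routine consequences of the Gorenstein hypothesis and the relative dimension being one.
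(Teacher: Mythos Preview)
Your plan is sound and the candidate complex is correct: by relative Serre duality your $E^\bullet = (R\pi_*R\HHom(\F,\E))^\vee$ agrees with the paper's $R\bar\pi_*(\bar\F\otimes\bar\E^\vee\otimes\bar\omega[1])$, and your amplitude argument via the Gorenstein hypothesis is right. But the route you take to the comparison map is genuinely different from the paper's. The paper does not use an Atiyah class or a derived linear stack. Instead it exploits the identification $\S \cong \Spec\Sym\bigl(R^1\pi_*(\F\otimes\E^\vee\otimes\omega)\bigr)$ directly: after restricting to a quasi-compact chart of $\N$, it picks a resolution $0\to K\to M\to \F\otimes\E^\vee\otimes\omega\to 0$ with $\pi_*K=\pi_*M=0$ and $R^1\pi_*K,\,R^1\pi_*M$ locally free, which both (a) gives a global two-term resolution $[R^1\pi_*K\to R^1\pi_*M]$ of $E^\bullet$ and (b) factors $G$ as a closed immersion $\S\hookrightarrow \Spec\Sym(R^1\pi_*M)$ followed by a smooth map to $\N$. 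From this factorization one reads off $\tau_{\geq -1}\Left_G \cong [I|_\S\to\Omega|_\S]$ and obtains the obstruction-theory map as the evident surjection $G^*R^1\pi_*K\twoheadrightarrow I|_\S$ together with the isomorphism $G^*R^1\pi_*M\cong\Omega|_\S$. Globalization is then done by passing to the associated cone stacks $h^1/h^0$ and gluing those, rather than gluing morphisms in the derived category.

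What each approach buys: the paper's method is completely elementary---no derived stacks, no Atiyah class, only the abelian-cone description and an explicit two-term resolution---and it handles the general case where $\F$ is merely coherent and flat, at the cost of a somewhat ad hoc gluing step. Your approach is more conceptual and would produce the map $E^\bullet\to\Left_G$ (not just to the truncation) in one stroke; indeed the paper carries out essentially your idea in its Proposition~\ref{Pmorph}, but only under the extra hypothesis that $\F$ is locally free, where the universal $\Phi$ becomes a section of an honest vector bundle. If you pursue your route in the general case, the point that needs genuine care is exactly the one you flag: making the Atiyah-class construction, or the identification of $\S$ with the truncation of the derived $\VV(E^\bullet)$, work uniformly when $\F$ is not locally free, and checking that $R\HHom(\F,\E)$ has the required finite Tor-amplitude over the base so that base change applies.
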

The construction of this relative obstruction theory allows us to define a perfect obstruction theory for the moduli spaces of simple coherent systems (i.e.\ $CS$ whose group of automorphisms is the scalars). As $\alpha$-stable coherent systems are simple, such an obstruction theory induces a perfect obstruction theory for every moduli space of $\alpha$-stable coherent systems.

\begin{thmn}[\ref{CobStabCohSys}]
Fix $\alpha \in \R$. Let $C$ be a smooth, projective, genus $g$ curve and let $(n,d,k)$ be a suitable triple of positive integers. Let $\beta := \beta(n,d,k)$ be the Brill Noether number. Then the moduli space of $\alpha$-stable coherent systems of type $(n,d,k)$ has a perfect obstruction theory of rank~$\beta$.
%For any smooth, projective, genus $g$ curve $C$ and for any triple $(n,d,k)$ the moduli space of simple coherent systems of type $(n,d,k)$ has a perfect obstruction theory of rank equal to the Brill-Noether number $\beta(n,d,k)$.
\end{thmn}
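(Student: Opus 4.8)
The plan is to specialize Theorem~\ref{PobTh} to a single smooth curve, convert the resulting relative obstruction theory into an absolute one by composing with the trivial obstruction theory of a smooth base, and then descend along the gerbe relating the moduli stack of simple coherent systems to its moduli space.

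First I would take $\M = \Spec\KK$ equipped with the constant family $C \to \Spec\KK$; since $C$ is smooth it is in particular projective and Gorenstein, so the whole apparatus of the paper applies. With this choice $\CohM$ and $\VecM$ become the stacks of coherent sheaves and of vector bundles on $C$, and $\N = \CohM\times_\M\VecM$ is the stack of pairs $(F,E)$. The coherent systems sit inside $\S$ as the locally closed substack $\SCS$ cut out by the conditions that $F\cong V\otimes\O_C$ be a trivial bundle and that the induced map $V \to \H^0(C,E)$ be injective; under $G$ this substack maps to the substack $\N_{CS}\subseteq\N$ on which $F$ is trivial, and one checks that $\SCS$ is open in the fibre product $\S\times_\N\N_{CS}$. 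Because the cotangent complex is stable under base change and under restriction to open substacks, the perfect relative obstruction theory $E^\bullet \to \tau_{\geq-1}\Left_G$ of Theorem~\ref{PobTh} restricts to a perfect relative obstruction theory for the forgetful morphism $G_{CS}\colon \SCS \to \N_{CS}$.

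The next step is to promote this relative theory to an absolute one. I would first record that $\N_{CS}$ is smooth: the stack of trivial rank-$k$ bundles is $\BGL_k$ (since $\H^0(\O_C)=\KK$), which is smooth, while $\VecM$ is smooth because $C$ is a curve and hence $\Ext^2(E,E)=0$, so deformations of $E$ are unobstructed; thus $\N_{CS}\cong\BGL_k\times\VecM$ is smooth and $\Left_{\N_{CS}}$ is a locally free sheaf in degree $0$. Using the transitivity triangle $G_{CS}^{*}\Left_{\N_{CS}} \to \Left_{\SCS} \to \Left_{G_{CS}}$ (and its truncation $\tau_{\geq-1}$), I would glue $G_{CS}^{*}\Left_{\N_{CS}}$ to $E^\bullet$ into a complex $D^\bullet$ fitting into a compatible triangle $G_{CS}^{*}\Left_{\N_{CS}}\to D^\bullet \to E^\bullet$; the induced map $D^\bullet \to \Left_{\SCS}$ is then an absolute obstruction theory, and it is perfect in $[-1,0]$ since it is the cone of a morphism between complexes perfect in that range. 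This is the standard composition of a relative perfect obstruction theory with a smooth base.

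Finally I would restrict to the open substack of $\alpha$-stable, hence simple, coherent systems and descend to the moduli space. Simple coherent systems have automorphism group exactly the scalars $\GG_m$, so this open locus of $\SCS$ is a $\GG_m$-gerbe over the moduli space $M=M_\alpha(n,d,k)$; because the scalar automorphisms act trivially (with weight zero) on $\Hom(V\otimes\O_C,E)$, on $\Ext^1(V\otimes\O_C,E)$, and on the deformation spaces of $E$ and $V$, the obstruction theory $D^\bullet$ is equivariant and descends to a perfect obstruction theory on $M$. It then remains to compute the rank: the relative rank is the relative virtual dimension $\chi(V\otimes\O_C,E)=k\,\chi(E)=k\bigl(d-n(g-1)\bigr)$, and $\dim\N_{CS}=-k^2+n^2(g-1)$, so the virtual dimension of the stack is $n^2(g-1)-k^2+kd-kn(g-1)=\beta-1$; passing to $M$ along the $\GG_m$-gerbe adds $1$, yielding rank exactly $\beta$. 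I expect the main obstacle to be this descent step: one must verify carefully that the scalars act trivially on $D^\bullet$, so that the theory on the gerbe genuinely originates from $M$, and track the $+1$ shift in virtual dimension produced by the rigidification — precisely the shift that turns the stack's expected dimension into the Brill--Noether number $\beta$.
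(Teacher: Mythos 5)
Your overall route — specialize to $\M=\Spec\KK$, restrict to the coherent-systems locus, convert the relative theory of Theorem~\ref{PobTh} into an absolute one over the smooth base $\BGL_k\times\VecM^{n,d}$, handle the scalars by descent along the $\GG_m$-gerbe, and do the same rank bookkeeping as in Corollary~\ref{CObCohSys} — is essentially the paper's, and your arithmetic ($r=k(d-n(g-1))$, base dimension $n^2(g-1)-k^2$, plus $1$ from the gerbe) is correct. But the relative-to-absolute step, as written, fails. You assert that since $\N_{CS}\cong\BGL_k\times\VecM^{n,d}$ is smooth, ``$\Left_{\N_{CS}}$ is a locally free sheaf in degree $0$.'' Smoothness is right, but the second claim is false: $\BGL_k$ and $\VecM^{n,d}$ are Artin stacks with positive-dimensional stabilizers, so their cotangent complexes have nonzero cohomology in degree $+1$ (for instance $\Left_{\BGL_k}\cong\mathfrak{gl}_k^\vee[-1]$, placed in degree $+1$). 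Consequently your cone $D^\bullet$ is only perfect in $[-1,1]$. Concretely, the triangle $G_{CS}^*\Left_{\N_{CS}}\to D^\bullet\to E^\bullet\to G_{CS}^*\Left_{\N_{CS}}[1]$ gives
\[
h^1(D^\bullet)\;=\;\Coker\bigl(h^0(E^\bullet)\longrightarrow h^1(G_{CS}^*\Left_{\N_{CS}})\bigr)\,,
\]
and the dual of this map is the infinitesimal action $\mathfrak{gl}(V)\oplus\End(E)\to\Hom(V\otimes\O_C,E)$, $(A,B)\mapsto B\varphi-\varphi A$, whose kernel at any simple (in particular any $\alpha$-stable) point is the one-dimensional space of diagonal scalars. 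So $h^1(D^\bullet)$ has rank one on the whole stable locus: $D^\bullet$ is not perfect in $[-1,0]$, and it cannot be an obstruction theory in the sense of \cite{BFantechi} anyway, since $\SCS\to\Spec\KK$ is not a Deligne--Mumford morphism before rigidification.

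Your final descent step cannot repair this, and this is where your diagnosis of ``the main obstacle'' goes wrong: the issue is not equivariance (the weights are indeed zero, by Lemma~\ref{Lhi}), but that pullback along a $\GG_m$-gerbe is exact, so any complex on $M$ pulling back to $D^\bullet$ still has $h^1$ of rank one, hence is not a perfect obstruction theory on $M$ either. This is precisely why the paper rigidifies \emph{before} taking the cone: it descends $E^\bullet$ and the morphism $E^\bullet\to\Left_G$ to $\tilde G:\S^\GG\to\N^\GG$ (Lemma~\ref{Lhi}, Proposition~\ref{POstrScend}, Corollary~\ref{Cdescends}), restricts to the simple locus so that $\S^\GG_{smp}\to\M$ is representable, and only then forms the cone over $q:\N^\GG\to\M$. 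After rigidification the relevant Lie algebra is $\bigl(\mathfrak{gl}(V)\oplus\End(E)\bigr)/\KK$, and simplicity says exactly that its action on $\varphi$ is injective; that is what kills the degree-$1$ cohomology of the cone and makes it perfect in $[-1,0]$. (A secondary wrinkle you gloss over: to form the cone compatibly with the transitivity triangle the paper uses Proposition~\ref{Pmorph} to lift the obstruction theory to a morphism into the untruncated $\Left_G$, which is where the hypothesis that $\F$ be locally free — true here, as $F=V\otimes\O_C$ — enters.) Your argument can be salvaged either by reordering it along the paper's lines, or by explicitly splitting off the trivial rank-one $h^1$ after descent, but as written the claim that $D^\bullet$ is a perfect obstruction theory is incorrect.
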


\section*{Acknowledgments}
These pages represent the conclusion of a wonderful experience that has been my Ph.D at SISSA (my Ph.D thesis is available online in the SISSA digital library \cite{Tesi}). It is very important for me to use some words in order to thank both my supervisors, Barbara Fantechi and Fabio Perroni, for all their support and their guide. Thanks to you and to all the people in SISSA I have spent four astonishing amazing years!

\bigskip
\bigskip
\noindent \textit{Notations and conventions}

\bigskip
\noindent Unless otherwise mentioned, we work over an algebraically closed field $\KK$.

We denote by $(Sch)$ the category of schemes of finite type over $\KK$; we assume all schemes to be objects of $(Sch)$.

A \textit{groupoid} is a category in which every morphism is an isomorphism.

A \textit{category fibered in groupoids} is a category fibered in groupoids over $(Sch)$ in the sense of~\cite{Olsson}.

If $T$ is a scheme and $X$ is a category fibered in groupoids, a \textit{$T$-point} of $X$ is an object in the groupoid $X(T)$.

An \textit{algebraic stack}, or Artin stack, is an algebraic stack over $\KK$ in the sense of~\cite{Artin1974} or~\cite{Olsson}. We assume all algebraic stacks to be locally of finite type over $\KK$.

A \textit{Deligne Mumford stack} is a Deligne-Mumford stack in the sense of~\cite{Deligne1969} or \cite{Olsson}.

A \textit{coherent system} is a coherent system in the sense of~\cite{LePotier} or equivalently a Brill-Noether pair in the sense of \cite{KingNew}.

%If
%\[
%\begin{tikzcd}
%A \times_S B \arrow[r] \arrow[d] & A \arrow[d, "f"] \\
%B \arrow[r, "g"] & S
%\end{tikzcd}
%\]
%is any cartesian diagram (or any 2-cartesian diagram), we denote the first projection by $(g)_f : A \times_S B \rightarrow A$ and the second projection by $(f)_g : A \times_S B \rightarrow B$.

\section{Generalized coherent systems}\label{Sstacks}

Recall that a coherent system of type $(n, d, k)$ on a smooth curve $C$ is a pair $(E,V)$, where $E$ is a vector bundle on $C$ of rank $n$ and degree $d$, and $V$ is a linear subspace of $H^0(C,E)$ of dimension $k$ \cite{LePotier}, \cite{KingNew}.

The following definition generalizes the notion of coherent system and it is the central object of this work.

\begin{defi}
Let $C$ be a projective curve over $\KK$. A \textit{generalized coherent system} on $C$ is a morphism of sheaves $\varphi : F \rightarrow E$, where $F, E \in \Coh(C)$ and $E$ is locally free.
\end{defi}

Notice that a coherent system $(E,V)$ on a smooth projective curve $C$ naturally induces a generalized coherent system $V \otimes \O_C \rightarrow E$.

In this section we construct a moduli stack of generalized coherent systems and we prove that it is an algebraic stack in the sense of Artin. 

\bigskip
Fix a ground algebraic stack $\M$ together with a flat projective relatively Gorenstein morphism $\M' \rightarrow \M$ of relative dimension 1. One may assume either that $\M = \M_g$ is the algebraic stack of genus $g$ smooth curves, or that $\M = \overline \M_g$ is the Deligne Mumford compactification of $\M_g$. We let $\M$ be any algebraic stack satisfying these properties, since we intend to work in the greatest possible generality.

\begin{notation}\label{NfamCurves}
Let $T$ be a scheme. If $T \rightarrow \M$ is a morphism of stacks, then $\M' \times_\M T \rightarrow T$ is a morphism of schemes and it is a relatively Gorenstein flat projective family of curves over $T$. On the other hand, if we say that $C \rightarrow T$ is a \textit{family of curves} (or more specifically an \textit{$\M$-family of curves} over $T$) we always mean that we have fixed a morphism of stacks $T \rightarrow \M$, $C = \M' \times_\M T$ and $C \rightarrow T$ is the second projection. Hence all the families of curves that we consider are projective and relatively Gorenstein.
\end{notation}

%Let $T$ be a scheme. If $T \rightarrow \M$ is a morphism of stacks we usually denote the fiber product $T \times_\M \M'$ by $C$, so that $C \rightarrow T$ is a relatively Gorenstein flat projective family of curves over $T$.

\begin{defi}\label{Dgcs}
Let $T$ be a scheme. A \textit{family of generalized coherent systems} $(C \rightarrow T, F \rightarrow E)$ over $T$ is defined by the following data:
\begin{enumerate}
\item an $\M$-family of curves $C \rightarrow T$ (as in \ref{NfamCurves});
\item a morphism of sheaves $F \rightarrow E$, where $F, E \in \Coh(C)$, $F$ is flat over $T$ and $E$ is locally free.
\end{enumerate}

An \textit{isomorphism} $(C \rightarrow T, F \rightarrow E) \rightarrow (C' \rightarrow T, F' \rightarrow E')$ of families of generalized coherent systems over $T$ is a triple $(\alpha, \beta, \gamma)$, where $\alpha : C \rightarrow C'$ is induced by an isomorphism in $\M(T)$, $\beta : F \rightarrow \alpha^*F'$ and $\gamma : E \rightarrow \alpha^*E'$ are isomorphisms of sheaves such that the following diagram commutes:
\[
\begin{tikzcd}
F \arrow[r] \arrow[d] & \alpha^*F'  \arrow[d] \\
E \arrow[r] & \alpha^*E' \,.
\end{tikzcd}
\]
\end{defi}

Let us denote the groupoid of families of generalized coherent systems over $T$ by $\S(T)$. Letting $T$ vary, we get a category fibered in groupoids that we denote by $\S$. Actually it is an algebraic stack, as we will prove in \ref{CstrongRep}. We call $\S$ the \textit{moduli stack of generalized coherent systems}. It comes together with a natural forgetful morphism
\[
\S \rightarrow \M \,.
\]
Notice that the induced groupoid functor $\S(T) \rightarrow \M(T)$ is not faithful, as there exist non trivial isomorphisms in $\S(T)$ which map to the identity in $\M(T)$.

\begin{notation}\label{NunivFamS}
Let $\S' := \M' \times_\M \S$ and let $\bar \pi : \S' \rightarrow \S$ denote the second projection.

The morphism $\bar \pi : \S' \rightarrow \S$ is relatively Gorenstein and we denote by $\bar \omega$ its dualizing bundle.

The stack $\S$ has a universal family which is a morphism $\phi : \bar \F \rightarrow \bar \E$ in $\Coh(\S')$. Notice that $\bar \F$ is flat over $\S$ and $\bar \E$ is locally free.
\end{notation}

\begin{defi}\label{DstackN}
Let $T$ be a scheme. Define a groupoid $\CohM(T)$ such that:
\begin{enumerate}
\item the objects of $\CohM(T)$ are pairs $(C \rightarrow T, F)$, where $C \rightarrow T$ is an $\M$-family of curves~(as in~\ref{NfamCurves}) and $F \in \Coh(C)$ is flat over $T$;
\item the isomorphisms $(C \rightarrow T, F) \rightarrow (C' \rightarrow T, F')$ of $\CohM(T)$ are pairs $(\alpha, \beta)$, where $\alpha : C \rightarrow C'$ is induced by an isomorphism in $\M(T)$ and $\beta : F \rightarrow \alpha^*F'$ is an isomorphism of sheaves.
\end{enumerate}
Letting $T$ vary, we get a category fibered in groupoids that we denote by $\CohM \,$. It comes with a natural forgetful morphism $\CohM \rightarrow \M$.

Analogously, define a category fibered in groupoids $\VecM$ whose $T$-points are pairs $(C \rightarrow T, E)$, where $C \rightarrow T$ is an $\M$-family of curves and $E \in \Coh(C)$ is locally free.

Define
\[
\N := \CohM \times_\M \VecM \,.
\]
Notice that there is a natural forgetful morphism $\N \rightarrow \M$. 
\end{defi}

\begin{lemma}
The category fibered in groupoids $\N$ is an algebraic stack.
\end{lemma}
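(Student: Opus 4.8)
The plan is to reduce the algebraicity of $\N$ to that of its two factors over $\M$, and then to a classical algebraicity statement for relative Quot/Coh stacks. Since $\M$ is algebraic by hypothesis and the class of algebraic stacks is stable under $2$-fibre products over an algebraic base, it suffices to prove that both $\CohM$ and $\VecM$ are algebraic; then $\N = \CohM \times_\M \VecM$ is algebraic as a fibre product of algebraic stacks over the algebraic stack $\M$.

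First I would show that the forgetful morphism $\CohM \to \M$ is relatively algebraic, i.e.\ that for every scheme $T$ and every morphism $T \to \M$ the fibre product $\CohM \times_\M T$ is an algebraic stack. Combined with the algebraicity of $\M$ and the standard gluing lemma that a stack admitting a representable-by-algebraic-stacks morphism to an algebraic stack is itself algebraic, this yields the algebraicity of $\CohM$. Now a morphism $T \to \M$ is exactly the datum of an $\M$-family of curves $C := \M' \times_\M T \to T$ (Notation \ref{NfamCurves}), which is flat, projective, and relatively Gorenstein, and by construction $\CohM \times_\M T$ is canonically the stack $\underline{\Coh}_{C/T}$ of coherent sheaves on $C$ that are flat over $T$.

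The heart of the matter is the algebraicity of $\underline{\Coh}_{C/T}$ for such a family, which I would obtain through the classical presentation via Grothendieck's Quot schemes. One decomposes $\underline{\Coh}_{C/T} = \coprod_P \underline{\Coh}^P_{C/T}$ into open-and-closed substacks indexed by the Hilbert polynomial $P$ relative to a fixed polarization $\O_C(1)$. Boundedness then provides, for each $P$ and each large $m$, an open substack on which $F(m)$ is globally generated with vanishing higher cohomology; choosing a basis of its $N := P(m)$ sections presents $F$ as a quotient $\O_C(-m)^{\oplus N} \twoheadrightarrow F$, i.e.\ as a point of the projective $T$-scheme $\operatorname{Quot}^P_{C/T}\bigl(\O_C(-m)^{\oplus N}\bigr)$. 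Restricting to the open locus $Q^\circ_m$ where the induced map on global sections is an isomorphism and quotienting by the $\GL_N$-action changing the chosen basis identifies this substack with $[Q^\circ_m / \GL_N]$, which is algebraic; letting $m \to \infty$ these open substacks exhaust $\underline{\Coh}^P_{C/T}$, and an increasing union of open algebraic substacks is algebraic. (Alternatively one may simply invoke the known algebraicity of the stack of coherent sheaves on a projective flat family, e.g.\ Laumon--Moret-Bailly or the Stacks Project.)

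Finally, $\VecM$ is the substack of $\CohM$ cut out by requiring $E$ to be locally free. Since local freeness of a $T$-flat coherent sheaf on the proper flat family $C \to T$ is an open condition on the base (the non-free locus is closed in $C$, and its image in $T$ is closed by properness), $\VecM$ is an open substack of $\CohM$, hence algebraic. The main obstacle is precisely the algebraicity of $\underline{\Coh}_{C/T}$: all the real content — boundedness, the Quot-scheme presentation, and the verification that the $\GL_N$-quotient genuinely recovers the stack of sheaves — is concentrated there, whereas the reduction of $\N$ to its two factors and the openness of the locally free locus are formal.
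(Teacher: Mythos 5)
Your proof is correct and follows essentially the same route as the paper: reduce the algebraicity of $\N = \CohM \times_\M \VecM$ to that of the two factors, then invoke the standard algebraicity of the stack of coherent sheaves (resp.\ vector bundles) on a projective flat family. The only difference is that the paper simply cites \cite{Olsson} and \cite{Hei2010} for this standard result, whereas you spell out its Quot-scheme proof and deduce $\VecM$ as the open locally-free locus in $\CohM$ --- both of which are sound.
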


\begin{proof}
It suffices to show that $\CohM$ and $\VecM$ are algebraic stacks. This is a standard result, see for example \cite{Olsson} or \cite{Hei2010}. %See for example \cite{} \note{mi serve una referenza}.
\end{proof}

There is a natural forgetful morphism
\[
G : \S \rightarrow \N \,.
\]
The induced groupoid functor $\S(T) \rightarrow \N(T)$ is faithful, as every isomorphism in $\S(T)$ comes from an isomorphism in $\N(T)$. Hence the morphism $G : \S \rightarrow \N$ is representable. In \ref{PstrongRep} we shall see that $G : \S \rightarrow \N$ is actually strongly representable and it gives $\S$ the structure of an abelian cone over $\N$.

\begin{notation}\label{NunivFamN}
Let $\N' := \M' \times_\M \N$ and let $\pi : \N' \rightarrow \N$ denote the second projection. 

The morphism $\pi : \N' \rightarrow \N$ is relatively Gorenstein and we denote by $\omega$ its dualizing bundle.

The stack $\N$ has a universal family which consists of a pair of coherent sheaves $(\F, \E)$ on $\N'$. Notice that $\F$ is flat over $\N$ and $\E$ is locally free.

In \ref{NunivFamS} we denoted the universal family of $\S$ by $\phi : \bar \F \rightarrow \bar \E$ and the dualizing bundle of the morphism $\bar \pi : \S' \rightarrow \S$ by $\bar \omega$. Notice that $\S' \cong \N' \times_\N \S$; let $\bar G : \S' \rightarrow \N'$ denote the first projection, then $\bar \F \cong \bar G^* \F$, $\bar \E \cong \bar G^* \E$ and $\bar \omega \cong \bar G^* \omega$.
\end{notation}

\begin{prop}\label{PstrongRep}
There is a natural isomorphism of $\N$-stacks
\[
\S \rightarrow \Spec\Sym(\R^1\pi_*(\F \otimes \E^\vee \otimes \omega)) \,.
\]
In particular, $\S$ is an abelian cone over $\N$.
\end{prop}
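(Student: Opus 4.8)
The plan is to compare functors of points over $\N$ and conclude by Yoneda. Because $G:\S\to\N$ is representable, it is enough to evaluate on $\N$-schemes. Given $f:T'\to\N$ classifying a family $(C_{T'}\to T', F_{T'}, E_{T'})$, Definition~\ref{Dgcs} together with the faithfulness of $\S(T')\to\N(T')$ noted above identifies the fibre of $G$ over $f$ with the set $\Hom_{\O_{C_{T'}}}(F_{T'},E_{T'})$. Thus, over $\N$, the stack $\S$ represents the relative homomorphism functor $h_\S:T'\mapsto \Hom_{C_{T'}}(F_{T'},E_{T'})$.

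For a coherent sheaf $\Q$ on $\N$, the abelian cone $\Spec\Sym(\Q)$ represents $T'\mapsto \Hom_{\O_{T'}}(f^*\Q,\O_{T'})$; in particular, once $\S\cong\Spec\Sym(\Q)$ is proved for $\Q:=\R^1\pi_*(\F\otimes\E^\vee\otimes\omega)$, the final clause of the statement follows by definition. The decisive feature of this $\Q$ is that it is the \emph{top} relative direct image along $\pi$, whose fibres have dimension $1$: the top direct image is right exact and its formation commutes with arbitrary base change, so $f^*\Q\cong\R^1\pi_{T'*}(\F_{T'}\otimes\E_{T'}^\vee\otimes\omega_{T'})$ for every $f:T'\to\N$. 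Hence it suffices to produce a natural isomorphism
\[
\Hom_{C_{T'}}(F_{T'},E_{T'})\;\cong\;\Hom_{\O_{T'}}\!\big(\R^1\pi_{T'*}(\F_{T'}\otimes\E_{T'}^\vee\otimes\omega_{T'}),\,\O_{T'}\big).
\]

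I would obtain this from relative Grothendieck--Serre duality for the proper Gorenstein morphism $\pi_{T'}:C_{T'}\to T'$ of relative dimension $1$, whose relative dualizing complex is the line bundle $\omega_{T'}$ placed in degree $-1$. Applying duality to $\A:=\F\otimes\E^\vee\otimes\omega$ and using that $\E$ is locally free and $\omega$ invertible, so that $\R\HHom(\A,\omega[1])\cong\R\HHom(\F,\E)[1]$, yields the canonical identification
\[
\R\HHom_\N\big(\R\pi_*(\F\otimes\E^\vee\otimes\omega),\,\O_\N\big)\;\cong\;\big(\R\pi_*\R\HHom(\F,\E)\big)[1].
\]
Over a geometric point this recovers the coherent Serre duality $\Hom_C(F,E)=\Hom_C(F\otimes E^\vee\otimes\omega,\omega)\cong\H^1(C,F\otimes E^\vee\otimes\omega)^\vee$, valid on a Gorenstein curve since $\Hom_C(\A,\omega)\cong\H^1(C,\A)^\vee$ for every coherent $\A$. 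I would build the natural transformation from $h_\S$ to the functor represented by $\Spec\Sym(\Q)$ out of the universal morphism $\phi:\bar\F\to\bar\E$ paired against $\omega$ via the trace map, and then verify it is an isomorphism by reduction to these fibres.

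The main obstacle is precisely the passage from the fibrewise Serre duality to an isomorphism of functors natural under \emph{arbitrary} base change $T'\to\N$, including non-reduced and non-flat $T'$. Two points make this delicate. First, $\F$ carries torsion and is only coherent, so it admits no naive dual; this is why the argument must be run with the dualizing line bundle $\omega$ and relies on the Gorenstein hypothesis. Second, and more importantly, the ``obvious'' candidate $\pi_*\HHom(\F,\E)$ fails to commute with base change, whereas the Serre-dual top direct image $\R^1\pi_*(\F\otimes\E^\vee\otimes\omega)$ does; this is the structural reason the representing sheaf must be written in the form appearing in the statement rather than as $\pi_*\HHom(\F,\E)$. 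Once the natural isomorphism is established, Yoneda gives the isomorphism of $\N$-stacks $\S\to\Spec\Sym(\R^1\pi_*(\F\otimes\E^\vee\otimes\omega))$, and this presentation exhibits $\S$ as an abelian cone over $\N$, which is the final assertion.
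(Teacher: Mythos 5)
Your proposal is correct and follows essentially the same route as the paper: the paper's proof also compares $T$-points of $\S$ and of $\Spec\Sym(\R^1\pi_*(\F\otimes\E^\vee\otimes\omega))$ over $\N$ and invokes ``Grothendieck duality and cohomology and base change'' for the canonical bijection $\Hom(\bar t^*\F,\bar t^*\E)\cong\Hom(t^*\R^1\pi_*(\F\otimes\E^\vee\otimes\omega),\O_T)$, checking compatibility with pullbacks. Your write-up simply expands that one-line citation, correctly identifying the two ingredients it hides: base change for the top direct image of a sheaf flat over the base, and relative Serre duality for the Gorenstein morphism $\pi$.
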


\begin{proof}
Fix a scheme $T$. Recall that $\S(T) = \{t : T \rightarrow \N, \varphi : \bar t^* \F \rightarrow \bar t^* \E\}$, where we have used the notation $\bar t := (t)_\pi : \N' \times_\N T \rightarrow \N'$. Define $\A := \Spec\Sym(\R^1\pi_*(\F \otimes \E^\vee \otimes \omega))$; then $\A(T) = \{t : T \rightarrow \N, \gamma : t^*\R^1\pi_*(\F \otimes \E^\vee \otimes \omega) \rightarrow \O_T\}$. By Grothendieck duality and cohomology and base change there is a canonical bijection $\Hom(\bar t^* \F, \bar t^* \E) \rightarrow  \Hom(t^*\R^1\pi_*(\F \otimes \E^\vee \otimes \omega), \O_T)$ which induces an equivalence of groupoids $\S(T) \rightarrow \A(T)$. This equivalence is compatible with pullbacks and hence it induces an isomorphism of stacks $\S \rightarrow \A$. Notice that it is indeed an isomorphism of $\N$-stacks.
\end{proof}

\begin{coro}\label{CstrongRep}
The moduli stack of generalized coherent systems $\S$ is an algebraic stack and the forgetful morphism $G : \S \rightarrow \N$ is strongly representable. \qed
\end{coro}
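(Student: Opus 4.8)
The plan is to deduce the corollary directly from Proposition \ref{PstrongRep}, so that essentially no new computation is required; the substantive work (Grothendieck duality together with cohomology and base change) has already been carried out there. The first step is to record that $\R^1\pi_*(\F \otimes \E^\vee \otimes \omega)$ is a quasi-coherent $\O_\N$-module: since $\pi : \N' \rightarrow \N$ is proper of relative dimension $1$ and $\F \otimes \E^\vee \otimes \omega$ is a coherent sheaf on $\N'$, its first higher direct image is coherent, hence quasi-coherent. I would then set $\A := \Spec\Sym(\R^1\pi_*(\F \otimes \E^\vee \otimes \omega))$, exactly as in the proof of Proposition \ref{PstrongRep}.

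The second step is to show that the structural morphism $\A \rightarrow \N$ is strongly representable, i.e.\ representable by schemes. The relative spectrum of a quasi-coherent sheaf of $\O_\N$-algebras is by construction an affine morphism, and the formation of $\Spec\Sym(-)$ commutes with arbitrary base change because $\Sym$ does. Concretely, for any scheme $T$ equipped with a morphism $T \rightarrow \N$, the fiber product $\A \times_\N T$ is the relative spectrum over $T$ of the pullback algebra, and is therefore an affine $T$-scheme; in particular it is a scheme. Transporting this conclusion along the isomorphism of $\N$-stacks $\S \xrightarrow{\sim} \A$ furnished by Proposition \ref{PstrongRep}, which is compatible with the projections to $\N$, shows that $G : \S \rightarrow \N$ is itself strongly representable.

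For the final step I would invoke the standard permanence property: if $\N$ is an algebraic stack and $G : \S \rightarrow \N$ is representable (a fortiori, strongly representable), then $\S$ is an algebraic stack. Since $\N$ was already shown to be algebraic, this yields the claim. I do not expect any genuine obstacle here: the only two points requiring care are that the higher direct image is quasi-coherent over the stack $\N$, and that the relative $\Spec\Sym$ construction is well behaved over a stack (in particular that it commutes with base change), both of which are standard. All the real content has been absorbed into Proposition \ref{PstrongRep}, so the corollary is formal.
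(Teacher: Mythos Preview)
Your proposal is correct and follows exactly the approach the paper intends: the corollary is marked with \qed\ and no proof, indicating it is meant to follow immediately from Proposition~\ref{PstrongRep} together with the standard facts you list (relative $\Spec\Sym$ of a quasi-coherent sheaf is affine over $\N$, and a stack representable over an algebraic stack is algebraic). You have simply spelled out the details the paper leaves implicit.
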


Now we show that the morphism $G : \S \rightarrow \N$ locally factorizes as the composition of a smooth morphism and a closed embedding. We need the following preliminary result.

\begin{lemma}\label{LglobRes}
Let $p : X \rightarrow Y$ be a flat projective morphism of algebraic stacks of relative dimension 1; assume that $Y$ is quasi-compact. Let $F \in \Coh(X)$ be a flat sheaf over $Y$. Then $F$ has a resolution
\[
0 \rightarrow K \rightarrow M \rightarrow F \rightarrow 0
\]
where $M$ is locally free, $p_* K = p_* M = 0$ and $\R^1p_* K$ and $\R^1p_* M$ are locally free. We say that $0 \rightarrow K \rightarrow M \rightarrow F \rightarrow 0$ is a resolution of $F$ with respect to $p : X \rightarrow Y$.
\end{lemma}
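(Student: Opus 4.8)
The plan is to realize $M$ as a twist of the relative global sections of a high twist of $F$, and then to extract all four cohomological conditions from a single fiberwise vanishing of $H^0$. Since $p$ is projective, fix a relatively ample line bundle $\O_X(1)$. As $Y$ is quasi-compact, relative Serre vanishing and boundedness provide one integer $n$ for which, simultaneously: $\R^1 p_*(F \otimes \O_X(n)) = 0$, so that $\G := p_*(F \otimes \O_X(n))$ is locally free with formation commuting with base change and the evaluation map $p^*\G \to F \otimes \O_X(n)$ is surjective; and $\H^0(X_y, \O_{X_y}(-n)) = 0$ for every point $y$ of $Y$, a negative line bundle on the relative curve $X_y$ having no sections once $n$ is large (uniformly in $y$, by quasi-compactness of $Y$). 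Set $M := p^*\G \otimes \O_X(-n)$, which is locally free; untwisting the surjection $p^*\G \twoheadrightarrow F \otimes \O_X(n)$ by $\O_X(-n)$ yields a surjection $M \twoheadrightarrow F$, whose kernel $K$ gives the sequence \[0 \to K \to M \to F \to 0.\]

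The technical heart is a base change criterion that uses relative dimension $1$ in an essential way: if $\E \in \Coh(X)$ is flat over $Y$ and $\H^0(X_y, \E_y) = 0$ for every $y \in Y$, then $p_*\E = 0$ and $\R^1 p_*\E$ is locally free. To see this, represent $\R p_*\E$ by a two-term complex $[P^0 \xrightarrow{d} P^1]$ of finite locally free $\O_Y$-modules whose formation commutes with arbitrary base change (Grothendieck's theorem; the amplitude is $[0,1]$ because the fibers are one-dimensional). The hypothesis means precisely that $d \otimes k(y)$ is injective for every $y$, so $d$ has constant rank and is a subbundle inclusion; hence $p_*\E = \Ker d = 0$ and $\R^1 p_*\E = \Coker d$ is locally free.

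It then suffices to verify the hypothesis of this criterion for $\E = M$ and $\E = K$. Both are flat over $Y$: $M$ is locally free, and $K$ is flat because $F$ is, which also guarantees that $0 \to K_y \to M_y \to F_y \to 0$ stays exact on every fiber $X_y$. On such a fiber $M_y \cong \O_{X_y}(-n)^{\oplus \rk \G}$, so $\H^0(X_y, M_y) = 0$ by the choice of $n$, and consequently $\H^0(X_y, K_y) \hookrightarrow \H^0(X_y, M_y) = 0$. Applying the criterion to $M$ and to $K$ gives $p_* M = p_* K = 0$ and the local freeness of $\R^1 p_* M$ and $\R^1 p_* K$, which is exactly the asserted resolution.

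The main obstacle is making the three classical inputs available over an algebraic stack rather than a scheme: a relatively ample invertible sheaf (from projectivity), the uniform Serre vanishing that fixes a single twist $n$ (this is where quasi-compactness of $Y$ enters), and the two-term representing complex with its base change property. Each is the stacky counterpart of a standard theorem, but all must be invoked in the stated generality; the genuinely delicate point, however, is purely local on $Y$ and relies on relative dimension $1$, so that $\R^1 p_*$ is the top direct image and its formation automatically commutes with base change.
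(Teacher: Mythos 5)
Your proposal is correct and takes essentially the same route as the paper's proof: the identical sheaf $M := p^*p_*(F \otimes \O_X(n)) \otimes \O_X(-n)$ for one sufficiently large $n$ (using quasi-compactness of $Y$), the same fiberwise vanishings $\H^0(X_y, M_y) = 0$ and $\H^0(X_y, K_y) \subseteq \H^0(X_y, M_y) = 0$, the same flatness argument for $K$, and the same reduction of the stack case to the scheme case. The only difference is presentational: where you prove a self-contained base-change criterion via Grothendieck's two-term representing complex, the paper cites Hartshorne (global generation, semicontinuity and base change) together with Nakayama's Lemma to reach the same conclusions.
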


\begin{proof}
We split the proof in two parts.

$(i)$ Assume that $p : X \rightarrow Y$ is a morphism of schemes and that $Y$ is an affine scheme.
Let $n \gg 0$ and $M := p^*p_*(F \otimes \O_X(n)) \otimes \O_X(-n)$. The sheaf $M$ is locally free, since $F$ is flat over $Y$ \cite[III.9.9]{hartshorne};  the canonical morphism $M \rightarrow F$ is a surjection \cite[III.8.8]{hartshorne}. For every point $y \in Y(\KK)$ let $X_y := X \times_Y \Spec \KK$ and let $\bar y : X_y \rightarrow X$ be the first projection; since $\KK$ is algebraically closed and $Y$ is quasi-compact, we have that $\H^0(X_y, \bar y^* M) = 0$ for every $y \in Y(\KK)$ \cite[III.7.6 and III.12.11]{hartshorne}. By Nakayama's Lemma that implies that $p_* M = 0$. In particular, $\R^1 p_* M$ is locally free.

Let $K := \Ker(M \rightarrow F)$; the sheaf $K$ is flat over $Y$ since $F$ is flat over $Y$, $M$ is locally free and $p : X \rightarrow Y$ is a flat morphism of schemes. Moreover $p_* K = 0$. Hence $\R^1p_* K$ is locally free.

$(ii)$ One can check that the previous construction works for morphisms of algebraic stacks using descent for coherent sheaves.
%
%
%
%\bigskip
%Fix a $k$-point $y \in Y(k)$. Let $X_y := X \times_Y \Spec k$ and let $\bar y : X_y \rightarrow X$ be the first projection. Since $p : X \rightarrow Y$ is projective and of relative dimension 1, there exists an integer $n_y > 0$ such that for any $m \geq n_y$ we have that $\bar y^*F \otimes \O(m)$ is generated by global sections and $\H^1(X_y, \bar y^*F \otimes \O(m)) = 0$. Hence by Nakayama's Lemma and cohomology and base change there exists $n \geq n_y$ such that for any $m \geq n$ the sheaf $p_*(F \otimes \O_X(m))$ is locally free and the natural map $p^*(p_*(F \otimes \O_X(m))) \rightarrow F \otimes \O_X(m)$ is surjective. Choose $m \geq n$ and let $M := p^*(p_*(F \otimes \O_X(m))) \otimes \O_X(-m)$; then $M$ is locally free and we have a natural surjective morphism $M \rightarrow F$. Notice that $H^0(X_y, \bar y^*M) = 0$ for any $y \in Y(k)$. Let $K$ be the kernel of $M \rightarrow F$. The statement is now a consequence of Nakayama's Lemma and cohomology and base change.
\end{proof}

\begin{rem}\label{RglobRes}
With the notation introduced in~\ref{LglobRes} fix a resolution $0 \rightarrow K \rightarrow M \rightarrow F \rightarrow 0$ of $F$ with respect to $p : X \rightarrow Y$. This resolution behaves well with base change, meaning that if $q : Z \rightarrow Y$ is a morphism of algebraic stacks and $\bar q : X \times_Y Z \rightarrow X$ denotes the first projection,  then $0 \rightarrow \bar q^*K \rightarrow \bar q^*M \rightarrow \bar q^*F \rightarrow 0$ is an exact sequence and it is a resolution of $\bar q^* F$ with respect to $X \times_Y Z \rightarrow Z$ (in the sense of~\ref{LglobRes}).
\end{rem}

\begin{prop}\label{PfactorizOfG}
Locally on $\N$, the forgetful morphism $G : \S \rightarrow \N$ factorizes as the composition of a closed embedding followed by a smooth morphism.
\end{prop}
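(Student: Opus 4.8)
The plan is to combine the explicit description of $\S$ from Proposition~\ref{PstrongRep} with the relative resolution provided by Lemma~\ref{LglobRes}. Set $\Q := \F \otimes \E^\vee \otimes \omega \in \Coh(\N')$; since $\F$ is flat over $\N$ and $\E^\vee \otimes \omega$ is locally free, $\Q$ is flat over $\N$. By Proposition~\ref{PstrongRep} there is an isomorphism of $\N$-stacks $\S \cong \Spec\Sym(\R^1\pi_* \Q)$, so it suffices to exhibit, locally on $\N$, a presentation of the coherent sheaf $\R^1\pi_*\Q$ by locally free sheaves: this will realize $\S$ as a closed substack of a vector bundle over $\N$.

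To do this I would restrict to a quasi-compact open substack $U \subseteq \N$ (such substacks cover $\N$, and both smoothness and the property of being a closed embedding are local on the target, so it suffices to argue over each $U$), and apply Lemma~\ref{LglobRes} to $\Q$ with respect to $\pi : \N' \to \N$ over $U$. This produces a short exact sequence $0 \to K \to M \to \Q \to 0$ with $M$ locally free, $\pi_* K = \pi_* M = 0$, and both $\R^1\pi_* K$ and $\R^1\pi_* M$ locally free.

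The key step is to push this sequence forward along $\pi$. Because $\pi$ has relative dimension $1$ we have $\R^2\pi_* = 0$, so the long exact sequence of higher direct images terminates and its tail reads
\[
\R^1\pi_* K \longrightarrow \R^1\pi_* M \longrightarrow \R^1\pi_* \Q \longrightarrow 0 ,
\]
which exhibits $\R^1\pi_*\Q$ as the cokernel of a morphism between the locally free sheaves $\R^1\pi_* K$ and $\R^1\pi_* M$. A surjection $\R^1\pi_* M \twoheadrightarrow \R^1\pi_*\Q$ induces a surjection of symmetric algebras $\Sym(\R^1\pi_* M) \twoheadrightarrow \Sym(\R^1\pi_*\Q)$, hence a closed embedding
\[
\S|_U \;\cong\; \Spec\Sym(\R^1\pi_*\Q) \hooklongrightarrow \Spec\Sym(\R^1\pi_* M) .
\]
Since $\R^1\pi_* M$ is locally free, $\Spec\Sym(\R^1\pi_* M) \to U$ is the total space of a vector bundle and in particular smooth, and by construction its composite with the closed embedding is $G|_U$. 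This gives the desired factorization.

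I expect the only real subtlety to be bookkeeping rather than any genuine obstacle: one must check that the sheaf to which Lemma~\ref{LglobRes} is applied is honestly flat over $\N$, and that $\R^1\pi_*$ is right exact in this situation. Both follow immediately, the first because $\Q$ is a locally free twist of the flat sheaf $\F$, and the second because $\pi$ has relative dimension $1$. The remaining assertions—that a surjection onto a coherent sheaf yields a surjection of symmetric algebras and hence a closed embedding of relative spectra, and that $\Spec\Sym$ of a locally free sheaf is smooth over the base—are standard.
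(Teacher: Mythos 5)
Your proposal is correct and follows essentially the same route as the paper: identify $\S$ with $\Spec\Sym(\R^1\pi_*(\F \otimes \E^\vee \otimes \omega))$ via Proposition~\ref{PstrongRep}, take a resolution as in Lemma~\ref{LglobRes} over a quasi-compact open, and use the induced surjection $\R^1\pi_* M \twoheadrightarrow \R^1\pi_*(\F \otimes \E^\vee \otimes \omega)$ to get a closed embedding into the vector bundle $\Spec\Sym(\R^1\pi_* M)$. The only difference is that you spell out the steps the paper leaves implicit (flatness of the twisted sheaf, vanishing of $\R^2\pi_*$, and right-exactness of $\R^1\pi_*$), which is fine.
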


\begin{proof}
By restricting to an open subset, we may assume that $\N$ is quasi-compact. Let $F := \F \otimes \E^\vee \otimes \omega \in \Coh(\N')$ (we set the notation in \ref{NunivFamN}). Choose a resolution $0 \rightarrow K \rightarrow M \rightarrow F \rightarrow 0$ of $F$ with respect to $\pi : \N' \rightarrow \N$ (as in \ref{LglobRes}). According to Proposition~\ref{PstrongRep}, the surjective morphism $M \rightarrow F$ induces a closed embedding $\S \rightarrow \Spec\Sym(\R^1\pi_*M)$ which is a morphism of $\N$-stacks. Since $\R^1\pi_*M$ is locallly free, the structure morphism $\Spec\Sym(\R^1\pi_*M) \rightarrow \N$ is smooth.
\end{proof}

We can use this factorization of the morphism $G : \S \rightarrow \N$ in order to give a local description of the truncated cotangent complex of $G$. %(i.e.\ $\tau_{\geq -1} \Left_G$).
That will be useful for the construction of a perfect obstruction theory for $G$. Indeed, as we will see in the proof of Proposition~\ref{PobTh}, it is essential to have a local description of the cotangent complex of $G$ related the resolutions introduced in~\ref{LglobRes}.

\begin{coro}\label{CcotcmpxFact}
Choose a local factorization $\S \rightarrow X \rightarrow \N$ of $G : \S \rightarrow \N$, as in~\ref{PfactorizOfG}. Let $I$ be the ideal sheaf of $\S \rightarrow X$ and $\Omega$ be the cotangent bundle of $X \rightarrow \N$. Then $\tau_{\geq -1} \Left_G \cong [I_{|\S} \rightarrow \Omega_{|\S}]$.
\end{coro}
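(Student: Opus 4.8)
The plan is to deduce the statement from the fundamental (transitivity) distinguished triangle attached to the factorization $\S \xrightarrow{i} X \xrightarrow{f} \N$ of $G$, where $i$ is the closed embedding with ideal sheaf $I$ and $f$ is the smooth morphism with cotangent bundle $\Omega = \Omega_{X/\N}$. That triangle reads
\[
\Left i^*\Left_f \rightarrow \Left_G \rightarrow \Left_i \xrightarrow{+1}.
\]
First I would identify the two outer terms. Since $f$ is smooth, $\Left_f \cong \Omega$ is concentrated in degree $0$, so its derived pullback is just $i^*\Omega = \Omega_{|\S}$, again in degree $0$. Since $i$ is a closed embedding, $\h^0(\Left_i) = \Omega_{\S/X} = 0$ while $\h^{-1}(\Left_i) = I/I^2 = I_{|\S}$, so $\tau_{\geq -1}\Left_i \cong I_{|\S}[1]$ and $\Left_i \in D^{\leq -1}$.

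The core of the argument is to truncate the triangle. Because $\tau_{\geq -1}$ is not triangulated I cannot apply it termwise for free; instead I exploit that $\tau_{\geq -1}$ is left adjoint to the inclusion $D^{\geq -1} \hookrightarrow D$. The connecting map $\delta : \Left_i \to (\Left i^*\Left_f)[1] = \Omega_{|\S}[1]$ has target in $D^{\geq -1}$, hence factors uniquely through the canonical map $\Left_i \to \tau_{\geq -1}\Left_i = I_{|\S}[1]$, producing $\bar\delta : I_{|\S}[1] \to \Omega_{|\S}[1]$, that is, a morphism of sheaves $d : I_{|\S} \to \Omega_{|\S}$; this $d$ is the expected conormal-to-cotangent map $\bar a \mapsto da$. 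Completing $\bar\delta$ to a triangle and rotating yields
\[
\Omega_{|\S} \rightarrow B' \rightarrow I_{|\S}[1] \xrightarrow{\bar\delta} \Omega_{|\S}[1],
\]
whose long exact cohomology sequence gives $B' \in D^{[-1,0]}$ with $\h^{-1}(B') = \Ker d$ and $\h^0(B') = \Coker d$. Thus $B'$ is already $(\geq -1)$-truncated and is precisely the cone of $d$, i.e.\ the two-term complex $[I_{|\S} \rightarrow \Omega_{|\S}]$ in degrees $-1, 0$.

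It remains to identify $B'$ with $\tau_{\geq -1}\Left_G$. Using $\id$ on $\Omega_{|\S}$ and the canonical map $\Left_i \to \tau_{\geq -1}\Left_i$, the triangulated axioms supply a morphism $\Left_G \to B'$ fitting into a map of triangles; comparing the two long exact cohomology sequences and applying the five lemma shows it is an isomorphism on $\h^i$ for $i \geq -1$. Since $B' \in D^{\geq -1}$, this morphism factors through $\tau_{\geq -1}\Left_G$, and the induced map $\tau_{\geq -1}\Left_G \to B'$ is a quasi-isomorphism, hence an isomorphism in the derived category. Combining the two paragraphs gives $\tau_{\geq -1}\Left_G \cong B' \cong [I_{|\S} \rightarrow \Omega_{|\S}]$. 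The main obstacle is exactly this truncation step: the desired equivalence is not a formal consequence of the transitivity triangle, and one must lean on the adjunction property of $\tau_{\geq -1}$ together with the degree bounds $\Left i^*\Left_f \in D^{[0,0]}$ and $\Left_i \in D^{\leq -1}$ to keep control of the truncated object.
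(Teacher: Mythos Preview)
Your argument is correct. The paper's own proof is a single sentence: it cites \cite[Tag 08SH]{stacks-project}, which records exactly this fact (that for a factorization through a smooth morphism followed by a closed immersion, the truncated cotangent complex is the two-term complex $[I/I^2 \to \Omega_{|\S}]$). What you have done is essentially to reprove that tag from scratch: you invoke the transitivity triangle for $G = f \circ i$, use that $\Left_f \cong \Omega$ is locally free in degree $0$ and that $\tau_{\geq -1}\Left_i \cong I_{|\S}[1]$, and then carefully truncate using the adjunction property of $\tau_{\geq -1}$ together with TR3 and a five-lemma comparison of long exact sequences. This is precisely the standard route to the cited result, so the mathematical content is the same; the difference is only that you spell out the homological algebra rather than pointing to the literature. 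One small remark: when you apply the five lemma at $\h^{-1}$, the leftmost vertical map $\h^{-2}(\Left_i) \to \h^{-2}(I_{|\S}[1]) = 0$ is not an isomorphism in general, so you are implicitly using the sharp form (epi on the left, mono on the right suffices); it would be worth saying so explicitly.
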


\begin{proof}
That is a consequence of \cite[Tag 08SH]{stacks-project}.
\end{proof}

\section{The obstruction theory}\label{SobTh}

In this section we construct a relative perfect obstruction theory for the forgetful morphism $G : \S \rightarrow \N$ (defined in Section~\ref{Sstacks}). In order to fix the notation, we recall the definition of obstruction theory as it is introduced in \cite{BFantechi}.

\begin{defi}
Let $X \rightarrow Y$ be a Deligne Mumford morphism of algebraic stacks; let $E^\bullet \in \obj D^{[-1,0]}_{\Coh}(X)$; let $\Left_{X/Y}$ be the cotangent complex of $X \rightarrow Y$. A morphism
\[
\xi : E^\bullet \longrightarrow \tau_{\geq -1}\Left_{X/Y}
\]
in $D^{[-1,0]}_{\Coh}(X)$ is called an \textit{obstruction theory} for the morphism $X \rightarrow Y$ if $h^0(\xi)$ is an isomorphism and $h^{-1}(\xi)$ is surjective.

We say that an obstruction theory $E^\bullet \rightarrow \tau_{\geq -1}\Left_{X/Y}$ is \textit{perfect}, if the complex $E^\bullet$ is perfect of perfect amplitude contained in $[-1,0]$.
\end{defi}

\begin{defi}
Let $X$ be an algebraic stack; let $E^\bullet \in \obj D^{[-1,0]}_{\Coh}(X)$ and let $M^\bullet = [M^{-1} \rightarrow M^0]$ be a morphism of locally free sheaves considered as an object of $D^{[-1,0]}_{\Coh}(X)$. An isomorphism $M^\bullet \rightarrow E^\bullet$ in $D^{[-1,0]}_{\Coh}(X)$ is called a \textit{global resolution} of~$E^\bullet$.
\end{defi}

\begin{prop}\label{PglobRes}
Let $p : X \rightarrow Y$ be a flat projective morphism of algebraic stacks of relative dimension 1. Let $F \in \Coh(X)$ be a flat sheaf over $Y$. Then $\R p_*F[1] \in D^{[-1,0]}_{\Coh}(X)$ is a perfect complex. Furthermore, if $Y$ is quasi-compact then $\R p_*F[1]$ has a global resolution.
\end{prop}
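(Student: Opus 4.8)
\section*{Proof proposal}

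The plan is to pin down the amplitude first and then produce the global resolution, from which perfectness follows at once. Since the fibres of $p$ are one-dimensional, $\R^i p_* F = 0$ for $i \geq 2$, while $\R^i p_* F = 0$ for $i < 0$ trivially; hence $\R p_* F \in D^{[0,1]}_{\Coh}(Y)$ and $\R p_* F[1] \in D^{[-1,0]}_{\Coh}(Y)$, which is the asserted degree bound.

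Perfectness is a condition that may be checked locally on $Y$, so for that assertion I would freely shrink $Y$ to a quasi-compact, indeed affine, open. It therefore suffices to prove, under the hypothesis that $Y$ is quasi-compact, that $\R p_* F[1]$ admits a global resolution by a two-term complex of locally free sheaves: such a complex is perfect of amplitude in $[-1,0]$, so this one construction settles both statements simultaneously (perfectness holding on each quasi-compact open, hence everywhere).

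To build the global resolution I would apply Lemma~\ref{LglobRes} to obtain a resolution $0 \to K \to M \to F \to 0$ of $F$ with respect to $p$, with $M$ locally free, $p_* K = p_* M = 0$, and $\R^1 p_* K$, $\R^1 p_* M$ locally free. Applying the triangulated functor $\R p_*$ to the distinguished triangle $K \to M \to F \to K[1]$ yields $\R p_* K \to \R p_* M \to \R p_* F \to \R p_* K[1]$. Since $p_* K = p_* M = 0$ and the fibres are one-dimensional, each of $\R p_* K$ and $\R p_* M$ is concentrated in a single degree, so that $\R p_* K \cong \R^1 p_* K[-1]$ and $\R p_* M \cong \R^1 p_* M[-1]$. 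Consequently $\R p_* F$ is the cone of the map $\R^1 p_*(\iota)[-1] : \R^1 p_* K[-1] \to \R^1 p_* M[-1]$, where $\iota : K \to M$ is the inclusion; tracking the shift, this cone is the two-term complex $[\R^1 p_* K \to \R^1 p_* M]$ placed in degrees $[0,1]$. Shifting once more, $\R p_* F[1] \cong [\R^1 p_* K \to \R^1 p_* M]$ in degrees $[-1,0]$, a complex of locally free sheaves by Lemma~\ref{LglobRes}; this is the desired global resolution.

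The step demanding the most care is the identification of the cone with the genuine two-term complex $[\R^1 p_* K \to \R^1 p_* M]$: one must keep track of the degree shift and verify, through the long exact sequence obtained by applying $\R p_*$ to $0 \to K \to M \to F \to 0$, that $\R^1 p_*(\iota)$ is the correct differential, its kernel recovering $p_* F$ and its cokernel recovering $\R^1 p_* F$. Compatibility of the whole construction with the stacky framework is already supplied by Lemma~\ref{LglobRes} together with the base-change behaviour recorded in Remark~\ref{RglobRes}, so no further descent arguments are needed.
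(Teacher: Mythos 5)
Your proposal is correct and takes essentially the same route as the paper: restrict to a quasi-compact open, take a resolution $0 \rightarrow K \rightarrow M \rightarrow F \rightarrow 0$ from Lemma~\ref{LglobRes}, and identify $[\R^1p_*K \rightarrow \R^1p_*M]$ as a global resolution of $\R p_*F[1]$. The only difference is one of detail: the paper asserts this identification in a single sentence, while you spell out the cone argument (using $p_*K = p_*M = 0$ and the one-dimensionality of the fibres) that justifies it.
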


\begin{proof}
By resticting to an open subsect, we may assume that $Y$ is quasi-compact. Choose a resolution $0 \rightarrow K \rightarrow M \rightarrow F \rightarrow 0$ of $F$ with respect to $p : X \rightarrow Y$ (as in~\ref{LglobRes}). Then $[\R^1p_*K \rightarrow \R^1p_*M]$ is a global resolution of $\R p_*F[1]$.
\end{proof}

Recall that $\bar \pi : \S' \rightarrow \S$ denotes the universal curve over $\S$, $\phi : \bar \F \rightarrow \bar \E$ denotes the universal family of $\S$ and $\bar \omega$ denotes the dualizing bundle of $\bar \pi : \S' \rightarrow \S$ (see~\ref{NunivFamS} and~\ref{NunivFamN}).

\begin{prop}\label{PobTh}
There is a canonical morphism 
\[
\R \bar \pi_*(\bar \F \otimes \bar \E^\vee \otimes \bar \omega[1]) \longrightarrow \tau_{\geq -1}\Left_G
\]
which is a perfect obstruction theory for $G: \S \rightarrow \N$.
\end{prop}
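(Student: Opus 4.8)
The plan is to work locally on $\N$ and build the morphism explicitly from a resolution, then argue that it glues to a canonical global morphism. First I restrict to a quasi-compact open of $\N$ and set $F := \F \otimes \E^\vee \otimes \omega \in \Coh(\N')$, so that the complex in the statement is $\R\bar\pi_*\bar F[1]$ with $\bar F := \bar G^* F = \bar\F \otimes \bar\E^\vee \otimes \bar\omega$. Choosing a resolution $0 \to K \to M \to F \to 0$ of $F$ with respect to $\pi$ (Lemma~\ref{LglobRes}), Proposition~\ref{PglobRes} gives the global resolution
\[
E^\bullet := \R\bar\pi_*\bar F[1] \;\cong\; \bigl[\,\R^1\bar\pi_*\bar K \xrightarrow{\ \bar\delta\ } \R^1\bar\pi_*\bar M\,\bigr],
\]
placed in degrees $-1,0$, where $\bar K := \bar G^* K$, $\bar M := \bar G^* M$, and $\bar\delta$ is the pullback of the connecting map $\delta : \R^1\pi_* K \to \R^1\pi_* M$.

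On the other side, the surjection $M \to F$ yields (Proposition~\ref{PfactorizOfG}) the local factorization $\S \hookrightarrow X := \Spec\Sym(\R^1\pi_* M) \to \N$, and Corollary~\ref{CcotcmpxFact} identifies $\tau_{\geq -1}\Left_G \cong [I_{|\S} \to \Omega_{|\S}]$. Because $X$ is the total space of the vector bundle attached to the locally free sheaf $\R^1\pi_* M$, one has $\Omega_{X/\N} \cong p_X^*\,\R^1\pi_* M$, hence by cohomology and base change $\Omega_{|\S} \cong G^*\R^1\pi_* M \cong \R^1\bar\pi_*\bar M$, matching the degree-$0$ term of $E^\bullet$. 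The embedding $\S \hookrightarrow X$ is cut out by the subsheaf $\Ker(\R^1\pi_* M \to \R^1\pi_* F) = \Im(\delta)$, so the conormal sheaf is $I_{|\S} \cong G^*\Im(\delta)$ and its differential $d : I_{|\S} \to \Omega_{|\S}$ is the pullback of the inclusion $\Im(\delta) \hookrightarrow \R^1\pi_* M$.

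I then define $\xi : E^\bullet \to \tau_{\geq -1}\Left_G$ to be the identity on the degree-$0$ term $\R^1\bar\pi_*\bar M = \Omega_{|\S}$ and the canonical surjection $\R^1\bar\pi_*\bar K \twoheadrightarrow G^*\Im(\delta) = I_{|\S}$ (right exactness of $G^*$) in degree $-1$. Commutativity of the resulting square holds because $\bar\delta$ factors as this surjection $\R^1\bar\pi_*\bar K \twoheadrightarrow I_{|\S}$ followed by $d : I_{|\S} \hookrightarrow \R^1\bar\pi_*\bar M$, which is exactly the two displayed descriptions of $\bar\delta$ and $d$. To see $\xi$ is an obstruction theory I compute cohomology: in degree $0$ both complexes have cokernel $\R^1\bar\pi_*\bar F$ (via the long exact sequence of $\R\bar\pi_*$ on one side and $\R^1\pi_* M/\Im(\delta) = \R^1\pi_* F$ on the other), and since $\xi$ is the identity there, $h^0(\xi)$ is an isomorphism. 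For $h^{-1}$, given $x \in \Ker(d)$ I lift it along the surjection $\R^1\bar\pi_*\bar K \to I_{|\S}$ to some $y$; then $\bar\delta(y) = d(x) = 0$, so $y \in \Ker(\bar\delta)$ maps to $x$, proving $h^{-1}(\xi)$ is surjective. Perfectness of $E^\bullet$ with amplitude in $[-1,0]$ is Proposition~\ref{PglobRes}.

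The main obstacle is \emph{canonicity}: the construction a priori depends on the chosen resolution and is only defined over a quasi-compact open. I expect to resolve this by showing the morphism is independent of $(K,M)$ — given two resolutions, a third one dominating both (e.g.\ built from $M \oplus M'$) compares them, and the maps induced in $D^{[-1,0]}_{\Coh}(\S)$ agree by Remark~\ref{RglobRes} on compatibility with base change. Since $\tau_{\geq -1}\Left_G$ and $E^\bullet$ are themselves canonical, the locally defined $\xi$ then glue to a single global morphism, which is the asserted canonical perfect obstruction theory.
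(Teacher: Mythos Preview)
Your local construction is essentially the paper's: restrict to a quasi-compact chart, pick a resolution $0 \to K \to M \to F \to 0$, factor $G$ through $X = \Spec\Sym(\R^1\pi_* M)$, identify $\Omega_{|\S} \cong G^*\R^1\pi_* M$ and obtain the surjection $G^*\R^1\pi_* K \twoheadrightarrow I_{|\S}$, and read off the obstruction-theory conditions. The paper states the surjection $q^*\R^1\pi_* K \to I$ and the isomorphism $q^*\R^1\pi_* M \to \Omega$ without your explicit cokernel/kernel verification, but the content is the same.

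The genuine gap is in your last paragraph. Showing that $\xi$ is independent of the chosen resolution on each chart does \emph{not} by itself produce a global morphism in $D^{[-1,0]}_{\Coh}(\S)$: morphisms in a derived category do not form a sheaf, so locally defined, pairwise compatible arrows need not glue (the cocycle condition lives in a group that is not controlled by ``canonicity of source and target''). The paper flags this explicitly --- ``since there is not a straightforward procedure to glue morphisms in the derived category of $\S$'' --- and circumvents it by passing, via the functor $h^1/h^0$ of \cite{BFantechi}, from two-term complexes to the associated abelian cone stacks. The local obstruction theory $\xi_0$ becomes a closed embedding of cone stacks
\[
\xi_0^\vee : [\Spec\Sym(I_{|S})/\Spec\Sym(\Omega_{|S})] \hooklongrightarrow [\Spec\Sym(\R^1(\bar\pi_0)_*\bar K_0)/\Spec\Sym(\R^1(\bar\pi_0)_*\bar M_0)],
\]
and these are honest morphisms of stacks over $S$, for which descent \emph{does} hold; they glue to a global closed embedding $\mathfrak N_{\S/\N} \hookrightarrow h^1/h^0\bigl(\R\bar\pi_*(\bar\F \otimes \bar\E^\vee \otimes \bar\omega[1])\bigr)$. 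Finally \cite[2.6]{BFantechi} converts this closed embedding back into a morphism of complexes with the required properties. Your sentence ``the locally defined $\xi$ then glue to a single global morphism'' is exactly the step that needs this detour through cone stacks; without it the argument is incomplete.
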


\begin{proof}
We split the proof in two parts.

$(i)$ By restricting to a local chart, we may assume that $\N$ is a (quasi-compact) scheme.

With the notation of $\ref{NunivFamN}$ let $F := \F \otimes \E^\vee \otimes \omega \in \Coh(\N')$. Choose a resolution $0 \rightarrow K \rightarrow M \rightarrow F \rightarrow 0$ of $F$ with respect to $\pi : \N' \rightarrow \N$ (as in~\ref{LglobRes}). It induces a factorization $G = q \circ \iota$ where $\iota : \S \rightarrow \Spec\Sym(\R^1\pi_*M)$ is a closed embedding and $q : \Spec\Sym(\R^1\pi_*M) \rightarrow \N$ is smooth (see the proof of~\ref{PfactorizOfG}). Let $I$ denote the ideal sheaf of $\iota$ and $\Omega$ denote the cotangent sheaf of $q$. Then $\tau_{\geq -1}\Left_G \cong [I_{|\S} \rightarrow \Omega_{|\S}]$ (see~\ref{CcotcmpxFact}), we have a natural surjection $q^*\R^1\pi_*K \rightarrow I$ and a natural isomorphism $q^*\R^1\pi_*M \rightarrow \Omega$, and the following diagram commutes:
\[
\begin{tikzcd}
G^*\R^1\pi_* K \arrow[r] \arrow[d] & G^*\R^1\pi_* M \arrow[d] \\
I_{|\S} \arrow[r] & \Omega_{|\S} \,.
\end{tikzcd}
\]
Hence $[G^*\R^1\pi_* K \rightarrow G^*\R^1\pi_* M] \rightarrow \tau_{\geq -1}\Left_G$ is an obstruction theory of $G$. Recall that $\S' \cong \N' \times_\N \S$ and that $\bar G : \S' \rightarrow \N'$ denotes the first projection (see~\ref{NunivFamN}). By cohomology and base change we have that $G^*\R^1\pi_* K \cong \R^1 \bar \pi_*\bar G^* K$ and $G^*\R^1\pi_* M \cong \R^1 \bar \pi_*\bar G^* M$. Notice that $0 \rightarrow \bar G^* K \rightarrow \bar G^* M \rightarrow \bar G^* F \rightarrow 0$ is a resolution of $\bar G^* F \cong \bar \F \otimes \bar \E^\vee \otimes \bar \omega$ with respect to $\bar \pi : \S' \rightarrow \S$ (see~\ref{RglobRes}). Hence $[\R^1 \bar \pi_*\bar G^* K \rightarrow \R^1 \bar \pi_*\bar G^* M]$ is a global resolution of $\R \bar \pi_*(\bar \F \otimes \bar \E^\vee \otimes \bar \omega[1])$ (see~\ref{PglobRes}), the morphism $[\R^1 \bar \pi_*\bar G^* K \rightarrow \R^1 \bar \pi_*\bar G^* M] \rightarrow \tau_{\geq -1}\Left_G$ induces a morphism $\R \bar \pi_*(\bar \F \otimes \bar \E^\vee \otimes \bar \omega[1]) \rightarrow \tau_{\geq -1}\Left_G$ which does not depend on the choice of the resolution and is a perfect obstruction theory for $G : \S \rightarrow \N$.

$(ii)$ Since there is not a straightforward procedure to glue morphisms in the derived category of $\S$, we need to introduce some formal tools described in \cite{BFantechi}. Let $N$ be a (quasi-compact) scheme and $N \hookrightarrow \N$ be a local chart of $\N$; let $S := \S \times_\N N$ and $G_0 : S \rightarrow N$ be the second projection; let $S' := \S' \times_\S S$ and $\bar \pi_0 : S' \rightarrow S$ be the second projection; let $\bar \F_0$, $\bar \E_0$ and $\bar \omega_0$ be respectively the restriction of $\bar \F$, $\bar \E$ and $\bar \omega$ on the scheme $S'$. In the first part of the proof we have constructed a perfect obstruction theory $\xi_0 : \R (\bar \pi_0)_*(\bar \F_0 \otimes \bar \E_0^\vee \otimes \bar \omega_0[1]) \longrightarrow \tau_{\geq -1}\Left_{G_0}$ in $\D^{[-1,0]}(S)$, and we have produced a global resolution $[\R^1 (\bar \pi_0)_*\bar K_0 \rightarrow \R^1 (\bar \pi_0)_*\bar  M_0]$ of $\R (\bar \pi_0)_*(\bar \F_0 \otimes \bar \E_0^\vee \otimes \bar \omega_0[1])$. The perfect complex $[\R^1 (\bar \pi_0)_*\bar  K_0 \rightarrow \R^1 (\bar \pi_0)_*\bar M_0]$ naturally induces a vector bundle stack $[\Spec\Sym(\R^1 (\bar \pi_0)_*\bar K_0)/\Spec\Sym(\R^1 (\bar \pi_0)_*\bar M_0)]$, which is the restriction on $S$ of the Picard stack $h^1/h^0(\R \bar \pi_*(\bar \F \otimes \bar \E^\vee \otimes \bar \omega[1]))$ \cite[Section 2 and Section~7]{BFantechi}. Analogously, the complex $[I_{|S} \rightarrow \Omega_{|S}]$ naturally induces an abelian cone stack $[\Spec\Sym(I_{|S})/\Spec\Sym(\Omega_{|S})]$, which is the restriction on $S$ of the relative intrinsic normal sheaf $\mathfrak N_{\S/\N} = h^1/h^0(\tau_{\geq -1}\Left_G)$. By \cite[2.6]{BFantechi} the perfect obstruction theory $\xi_0$ induces a closed embedding $\xi_0^\vee : [\Spec\Sym(I_{|S})/\Spec\Sym(\Omega_{|S})] \rightarrow [\Spec\Sym(\R^1 (\bar \pi_0)_*\bar K_0)/\Spec\Sym(\R^1 (\bar \pi_0)_*\bar M_0)]$. One can do this procedure on every local chart of $\N$ and check that the morphisms $\xi_0^\vee$ glue to a global morphism $\xi^\vee : \mathfrak N_{\S/\N} \rightarrow h^1/h^0(\R \bar \pi_*(\bar \F \otimes \bar \E^\vee \otimes \bar \omega[1]))$, which is a closed embedding by construction. Hence, by \cite[2.6]{BFantechi}, $\xi^\vee$ is induced by a morphism $\xi : \R \bar \pi_*(\bar \F \otimes \bar \E^\vee \otimes \bar \omega[1]) \longrightarrow \tau_{\geq -1}\Left_G$ which is a perfect obstruction theory for $G : \S \rightarrow \N$.
\end{proof}

%Notice that we have proved that the obstruction theory $\R \bar \pi_*(\bar \F \otimes \bar \E^\vee \otimes \bar \omega[1])$ has a global resolution. %As shown in~\cite[Section 5]{BFantechi}, it gives rise to a virtual fundamental class $[\S, \R \bar \pi_*(\bar \F \otimes \bar \E^\vee \otimes \bar \omega[1])] \in \AA_g(\S)$ where $g = \dim \N + \rk \R \bar \pi_*(\bar \F \otimes \bar \E^\vee \otimes \bar \omega[1])$. \note{Ãš vera sta cosa?}

\begin{coro}\label{cTanOb}
Let $n := (C \rightarrow \Spec k, F, E) \in \N(\KK)$ be a $\KK$-point of $\N$ (see~\ref{DstackN}) and let $s \in \S(\KK)$ be a $\KK$-point of $\S$ such that $G(s) = n$. Then the tangent space of $G : \S \rightarrow \N$ at $s$ is $\Hom(F, E)$ and an obstruction space of $G$ at $s$ is $\Ext^1(F,E)$.
\end{coro}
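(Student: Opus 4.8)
The plan is to compute the two cohomology sheaves of the obstruction theory constructed in Proposition~\ref{PobTh} and evaluate them at the point $s$. By that proposition, the relative obstruction theory for $G : \S \rightarrow \N$ is $\R \bar \pi_*(\bar \F \otimes \bar \E^\vee \otimes \bar \omega[1]) \longrightarrow \tau_{\geq -1}\Left_G$. The general yoga of perfect obstruction theories (as in~\cite{BFantechi}) says that if $E^\bullet \to \tau_{\geq -1}\Left_G$ is a perfect obstruction theory, then at a point $s$ the Zariski tangent space of the fiber is $h^0((E^\bullet)^\vee|_s) = (h^0(E^\bullet|_s))^\vee$ dualized appropriately, and the obstruction space is $h^1((E^\bullet)^\vee|_s)$. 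So the task reduces to identifying the cohomology of the fiber of $\R \bar \pi_*(\bar \F \otimes \bar \E^\vee \otimes \bar \omega[1])$ at $s$ in terms of $\Hom(F,E)$ and $\Ext^1(F,E)$.

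First I would pull back the universal objects to the point. Writing $n = (C \to \Spec\KK, F, E)$ and $s$ lying over it, the fiber of $\S' \to \S$ over $s$ is exactly the curve $C$, and the restrictions of $\bar \F$, $\bar \E$, $\bar \omega$ to this fiber are $F$, $E$, and $\omega_C$ respectively (the dualizing sheaf of $C$, since $\bar \pi$ is relatively Gorenstein). By cohomology and base change, the fiber of $\R \bar \pi_*(\bar \F \otimes \bar \E^\vee \otimes \bar \omega[1])$ at $s$ is computed by $\R\Gamma(C, F \otimes E^\vee \otimes \omega_C)[1]$. Its cohomology in degree $-1$ is $\H^0(C, F \otimes E^\vee \otimes \omega_C)$ and in degree $0$ is $\H^1(C, F \otimes E^\vee \otimes \omega_C)$.

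Next I would apply Serre duality on the Gorenstein curve $C$. Since $E$ is locally free, $F \otimes E^\vee \otimes \omega_C \cong \HHom(E, F) \otimes \omega_C$, and Serre--Grothendieck duality gives
\[
\H^i(C, \HHom(E,F) \otimes \omega_C) \cong \Ext^{1-i}(\HHom(E,F), \O_C)^\vee \cong \Ext^{1-i}(F, E)^\vee,
\]
using again that $E$ is a vector bundle to move it across the $\Ext$. Thus $h^{-1}$ of the fiber is $\Ext^1(F,E)^\vee$ and $h^0$ of the fiber is $\Hom(F,E)^\vee = \Ext^0(F,E)^\vee$. Dualizing the whole two-term complex to pass from the obstruction-theory object $E^\bullet$ to its derived dual $(E^\bullet)^\vee$ (which governs tangent and obstruction spaces), the degree-$0$ cohomology becomes $\Hom(F,E)$, identifying the relative tangent space of $G$ at $s$, and the degree-$1$ cohomology becomes $\Ext^1(F,E)$, identifying the obstruction space.

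The main obstacle I expect is purely bookkeeping: keeping track of the shift $[1]$, the duality, and the convention by which a perfect obstruction theory $E^\bullet \to \tau_{\geq -1}\Left_G$ yields tangent space $=h^0$ and obstruction space $=h^1$ of the dual complex, so that all the dualizations cancel correctly to produce $\Hom(F,E)$ rather than its dual. Since this is a statement over a single $\KK$-point rather than in a family, there are no flatness or base-change subtleties beyond the standard cohomology-and-base-change isomorphism already invoked in~\ref{PobTh}, so once the indexing is pinned down the result is immediate.
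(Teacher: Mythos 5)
Your proposal is correct and follows essentially the same route as the paper: pull back the obstruction-theory complex $\R \bar \pi_*(\bar \F \otimes \bar \E^\vee \otimes \bar \omega[1])$ to the point $s$, identify it via cohomology and base change with $\R\Gamma(C, F \otimes E^\vee \otimes \omega_C[1])$, and then dualize. The paper's proof leaves the Serre--Grothendieck duality step implicit (it simply asserts $\h^0((P^\bullet)^\vee) \cong \Hom(F,E)$ and $\h^1((P^\bullet)^\vee) \cong \Ext^1(F,E)$), whereas you spell it out correctly; this is a fair and harmless expansion of the same argument.
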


\begin{proof}
Define $P^\bullet := \LL s^*\R \bar \pi_*(\bar \F \otimes \bar \E^\vee \otimes \bar \omega[1]) \in D^{[-1,0]}_{\Coh}(\KK)$. By base change we have that $P^\bullet \cong \R\Gamma(C, F \otimes E^\vee \otimes \omega_C[1])$ where $\omega_C$ is the dualizing sheaf of $C$. Hence $\h^0((P^\bullet)^\vee) \cong \Hom(F, E)$ and $\h^1((P^\bullet)^\vee) \cong \Ext^1(F, E)$.
\end{proof}

\begin{prop}\label{Pmorph}
If the universal sheaf $\F$ of $\N$ is locally free, then there exists a canonical morphism 
\[
\R \bar \pi_*(\bar \F \otimes \bar \E^\vee \otimes \bar \omega[1]) \longrightarrow \Left_G 
\]
which induces the obstruction theory for $G : \S \rightarrow \N$.
\end{prop}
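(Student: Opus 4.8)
The plan is to produce the asserted morphism by lifting the obstruction theory $\xi : E^\bullet \to \tau_{\geq -1}\Left_G$ of Proposition~\ref{PobTh} along the canonical truncation map $\Left_G \to \tau_{\geq -1}\Left_G$, where I abbreviate $E^\bullet := \R\bar\pi_*(\bar\F\otimes\bar\E^\vee\otimes\bar\omega[1])$. The first step is to record, via Proposition~\ref{PglobRes}, that $E^\bullet$ is perfect of amplitude in $[-1,0]$, so that its dual $(E^\bullet)^\vee$ has amplitude in $[0,1]$. It is here that the hypothesis enters: since $\bar\F$ is locally free, $\HHom(\bar\F,\bar\E) = \bar\F^\vee\otimes\bar\E$ is locally free, and relative Grothendieck duality for $\bar\pi$ (proper of relative dimension $1$, with dualizing bundle $\bar\omega$) yields the clean identification $(E^\bullet)^\vee \cong \R\bar\pi_*\HHom(\bar\F,\bar\E)$, the complex genuinely governing deformations of the universal morphism $\phi : \bar\F \to \bar\E$.

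Next I would analyze the lifting obstruction. Because $G$ is representable (Corollary~\ref{CstrongRep}), $\Left_G$ is concentrated in non-positive degrees, so $\tau_{\leq -2}\Left_G \in D^{\leq -2}$. Applying $\Hom(E^\bullet,-)$ to the truncation triangle $\tau_{\leq -2}\Left_G \to \Left_G \to \tau_{\geq -1}\Left_G \xrightarrow{+1}$, a lift of $\xi$ exists once $\Hom(E^\bullet, \tau_{\leq -2}\Left_G[1]) = 0$ and is unique once $\Hom(E^\bullet, \tau_{\leq -2}\Left_G) = 0$. Using perfectness to write $\R\Hom(E^\bullet, \tau_{\leq -2}\Left_G[j]) \cong (E^\bullet)^\vee \otimes^{\mathrm L} \tau_{\leq -2}\Left_G[j]$, the amplitude bounds $(E^\bullet)^\vee\in D^{[0,1]}$ and $\tau_{\leq -2}\Left_G[j]\in D^{\leq -2-j}$ give a complex in $D^{\leq -1-j}$, whose $H^0$ vanishes for all $j\geq 0$, in particular for $j=0,1$. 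This furnishes a unique morphism $\tilde\xi : E^\bullet \to \Left_G$ lifting $\xi$; composing $\tilde\xi$ with $\Left_G \to \tau_{\geq -1}\Left_G$ returns $\xi$, so $\tilde\xi$ induces the obstruction theory of Proposition~\ref{PobTh}.

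The point I would treat most carefully is canonicity and compatibility with the local-to-global construction of~\ref{PobTh}. I would run the vanishing argument locally on $\N$, where $E^\bullet$ admits the two-term resolution $[\R^1\bar\pi_*\bar G^*K \to \R^1\bar\pi_*\bar G^*M]$ and $\Left_G \cong [I_{|\S}\to\Omega_{|\S}]$ by Corollary~\ref{CcotcmpxFact}; uniqueness of the lift then forces the local lifts to agree on overlaps, so they glue to a global $\tilde\xi$ that is independent of the chosen resolution and hence canonical. Finally I would observe that, under the duality identification $(E^\bullet)^\vee \cong \R\bar\pi_*\HHom(\bar\F,\bar\E)$, the dual of $\tilde\xi$ is the expected trace/Atiyah-type map built from the universal $\phi$, which is the conceptual reason the statement is phrased for $\F$ locally free. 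The main obstacle is therefore not the existence of the lift, which is formal, but verifying that these unique local lifts are mutually compatible and reconstruct the morphism of~\ref{PobTh} after truncation.
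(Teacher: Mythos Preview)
Your approach is genuinely different from the paper's, and it has a gap that you do not quite close.

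The paper does \emph{not} lift $\xi$ along the truncation map. Instead it builds the morphism directly from the geometry. Because $\F$ is locally free, the abelian cone $\Spec\Sym(\bar\F\otimes\bar\E^\vee)\to\S'$ is a vector bundle, hence its structure map $\bar p$ is smooth and $\Left_{\bar p}\cong\bar p^*(\bar\F\otimes\bar\E^\vee)$. The universal $\phi$ gives a section $f$ of $\bar p$, and the two transitivity triangles for the compositions $\bar p\circ v$ and $z\circ f$ (with $z$ the projection to $\N'$) produce a map $\bar\F\otimes\bar\E^\vee\cong\LL f^*\Left_{\bar p}\to\LL\bar\pi^*\Left_G$. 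Grothendieck duality then turns this into the desired morphism $\R\bar\pi_*(\bar\F\otimes\bar\E^\vee\otimes\bar\omega[1])\to\Left_G$. The hypothesis is used exactly once, to make $\bar p$ smooth so that $\Left_{\bar p}$ is the sheaf $\bar p^*(\bar\F\otimes\bar\E^\vee)$.

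In your argument the hypothesis is decorative: the identification $(E^\bullet)^\vee\cong\R\bar\pi_*\HHom(\bar\F,\bar\E)$ is never used in the lifting step, which relies only on $E^\bullet$ being perfect of amplitude $[-1,0]$, a fact that holds without local freeness (Proposition~\ref{PglobRes}). So if your argument were complete it would prove the proposition with no hypothesis at all, which should make you suspicious.

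The actual gap is in the vanishing. Your amplitude computation shows that the \emph{sheaf} $\R\HHom(E^\bullet,\tau_{\leq -2}\Left_G[j])$ lies in $D^{\leq -1-j}$, but the lifting problem is governed by the \emph{global} group $\Hom_{D(\S)}(E^\bullet,\tau_{\leq -2}\Left_G[j])=H^0\bigl(\R\Gamma(\S,\R\HHom(\ldots))\bigr)$, and on a non-affine stack $H^0$ of something in $D^{\leq -1}$ need not vanish. Your fix is to work over affine opens of $\N$ (where $\S$ is indeed affine, so the vanishing holds) and then say that local uniqueness forces the local lifts to glue. But agreement on overlaps is not enough to glue morphisms in the derived category; this is precisely the difficulty the paper confronts in part~(ii) of the proof of Proposition~\ref{PobTh}, where it has to pass through $h^1/h^0$ and cone stacks rather than glue in $D(\S)$ directly. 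You would need an analogous mechanism here, and you have not supplied one. The paper sidesteps this entirely by producing a single global morphism from the section $f$, with no gluing required.
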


\begin{proof}
Since both $\F$ and $\bar \F$ are locally free sheaves ($\bar \F$ is the pullback of $\F$, see~\ref{NunivFamS} and~\ref{NunivFamN}), the abelian cones $p : \Spec\Sym(\F \otimes \E^\vee) \rightarrow \N'$ and $\bar p : \Spec\Sym(\bar \F \otimes \bar \E^\vee) \rightarrow \S'$ are vector bundles and, hence, their structure morphisms $p$ and $\bar p$ are smooth. The universal morphism $\phi : \bar \F \rightarrow \bar \E$ canonically induces a section $f : \S' \rightarrow \Spec\Sym(\bar \F \otimes \E^\vee)$ of $\bar p$. Denote by $G' : \S' \rightarrow \N'$ the pullback morphism of $G : \S \rightarrow \N$ via $\pi : \N' \rightarrow \N$, and by $v : \Spec\Sym(\bar \F \otimes \bar \E^\vee) \rightarrow \Spec\Sym(\F \otimes \E^\vee)$ the pullback morphism of $G : \S \rightarrow \N$ via $\pi \circ p : \Spec\Sym(\F \otimes \E^\vee) \rightarrow \N$. We have natural isomorphisms $\Left_{G'} \cong \LL\bar \pi^* \Left_G$ and $\Left_{\bar p} \cong \LL v^*\Left_p$.
Define $z := p \circ v$; then $G' = z \circ f$. Hence we have distinguished triangles
\begin{align*}
\LL v^*\Left_p \rightarrow \Left_z \rightarrow \Left_v \rightarrow  \LL v^*\Left_p[1] &\,, \\
\LL f^*\Left_z \rightarrow \Left_{G'} \rightarrow \Left_f \rightarrow \LL f^*\Left_z[1] &\,.
\end{align*}
From the first triangle we get a map $\Left_{\bar p} \rightarrow \Left_z$ and hence a map $\LL f^*\Left_{\bar p} \rightarrow \LL f^*\Left_z$; from the second triangle we get a map $\LL f^*\Left_z \rightarrow \LL \bar \pi^*\Left_G$. Composing them we obtain a map $\LL f^*\Left_{\bar p} \rightarrow \LL \bar \pi^*\Left_G$.
But $\Left_{\bar p} \cong\Omega_{\bar p} \cong \LL \bar p^*(\bar \F \otimes \bar \E^\vee)$ and, hence, $\LL f^*\Left_{\bar p} \cong \LL f^* \LL p^*(\bar \F \otimes \bar \E^\vee) \cong \bar \F \otimes \bar \E^\vee$ and we have a map
\[
\bar \F \otimes \bar \E^\vee \rightarrow \LL \bar \pi^* \Left_G \,.
\]
Now we use Grothendieck duality:
\begin{align*}
\Hom_{\D^b(\S')}(\bar \F \otimes \bar \E^\vee,\bar \pi^* \Left_G) 
			&\cong \Hom_{\D^b(\S')}(\bar \F \otimes \bar \E^\vee \otimes \bar \omega[1],\bar \pi^* \Left_G \otimes \, \bar \omega[1]) \\
			&\cong \Hom_{\D^b(\S')}(\bar \F \otimes \bar \E^\vee \otimes \bar \omega[1],\bar \pi^! \Left_G) \\
			&\cong \Hom_{\D^b(\S)}(\R \bar \pi_*(\bar \F \otimes \bar \E^\vee \otimes \bar \omega[1]), \Left_G) \,.
\end{align*}
Hence, the morphism $\bar \F \otimes \bar \E^\vee \rightarrow \LL \bar \pi^* \Left_G$ naturally induces a morphism 
\[
\R \bar \pi_*(\bar \F \otimes \bar \E^\vee \otimes \bar \omega[1]) \rightarrow \Left_G \,. \qedhere
\]
\end{proof}

\section{Rigidification}\label{sRigid}
Let $X$ be an algebraic stack, let $H$ be a separated group scheme and assume that for any affine scheme $T$ and any $x \in X(T)$ there is an injective morphism of groups $H(T) \rightarrow \Aut(x)$ which is compatible with pullbacks. As shown in~\cite[Section~5.1]{acv}, this implies the existence of a canonical algebraic stack $X^H$ and a canonical morphism $X \rightarrow X^H$ which makes  $X$ into a gerbe over $X^H$ banded by $H$. The morphism $X \rightarrow X^H$ is called \textit{the rigidification of $X$ along $H$}.

In this section we describe the rigidification of the stack $\S$ with respect to the multiplication by scalars and we prove that the obstruction theory for the forgetful morphism $G : \S \rightarrow \N$ descends to the rigidification (the stacks $\S$ and $\N$ and the morphism $G : \S \rightarrow \N$ are defined in Section~\ref{Sstacks}; we constructed a relative obstruction theory for $G : \S \rightarrow \N$ is Section~\ref{SobTh}).

\begin{notation}
We use the notation $\GG$ to denote the multiplicative group scheme $\GG_m = \Spec \C[t, t^{-1}]$ and $\O(\GG)$ to denote the space of global sections $\H^0(\GG,\O_\GG)$.
\end{notation}

\begin{lemma}
Let $T$ be an affine scheme and let $s := (C \rightarrow T, F \rightarrow E) \in \S(T)$ be a family of generalized coherent systems over $T$ (see~\ref{Dgcs}). Then there is a canonical injective morphism of groups $\GG(T) \rightarrow \Aut(s)$ which is compatible with pullbacks.
\end{lemma}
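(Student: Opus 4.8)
The plan is to construct the injective group homomorphism $\GG(T) \to \Aut(s)$ explicitly and then check the two required properties (injectivity and compatibility with pullbacks). Recall that $\GG(T) = \Hom(T, \GG_m) = \O(T)^\times$ is the group of invertible global functions on $T$. Given such a unit $\lambda \in \O(T)^\times$, I want to produce an automorphism of the generalized coherent system $s = (C \to T,\, \varphi : F \to E)$. The natural choice is to fix the curve $C \to T$ (take $\alpha = \id_C$, coming from the identity in $\M(T)$) and to rescale the source sheaf $F$ by $\lambda$ while fixing $E$: that is, set $\beta := \lambda \cdot \id_F$ and $\gamma := \id_E$. Here $\lambda \cdot \id_F$ means the multiplication-by-$\lambda$ map, which makes sense because $F$ is an $\O_C$-module and $\lambda$ pulls back to a central unit in $\O_C$ via $C \to T$.

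The key step is to verify that the triple $(\id_C, \lambda\cdot\id_F, \id_E)$ is genuinely an isomorphism of families in the sense of Definition~\ref{Dgcs}, i.e.\ that the required square commutes. The relevant diagram demands $\varphi \circ (\lambda \cdot \id_F) = \id_E \circ \varphi$, which reads $\lambda \cdot \varphi = \varphi$. This does \emph{not} hold in general, so the naive scaling of $F$ alone fails the compatibility condition. The correct construction must scale \emph{both} $F$ and $E$ by the \emph{same} unit, so that the scalar cancels through $\varphi$: I take $\beta := \lambda \cdot \id_F$ and $\gamma := \lambda \cdot \id_E$. Then commutativity of the square amounts to $\varphi \circ (\lambda\cdot\id_F) = (\lambda\cdot\id_E)\circ\varphi$, and both sides equal $\lambda\cdot\varphi$ by $\O_C$-linearity of $\varphi$, so the square commutes. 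Thus $\lambda \mapsto (\id_C, \lambda\cdot\id_F, \lambda\cdot\id_E)$ defines an element of $\Aut(s)$.

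Next I would check that this assignment is a group homomorphism and is injective. The homomorphism property is immediate since composing the automorphisms attached to $\lambda$ and $\mu$ multiplies the scalars, giving the automorphism attached to $\lambda\mu$, matching the group law on $\O(T)^\times$. For injectivity, suppose $\lambda \mapsto (\id_C, \lambda\cdot\id_F, \lambda\cdot\id_E)$ is the identity automorphism; then in particular $\lambda \cdot \id_E = \id_E$ as an automorphism of the locally free sheaf $E$. Since $E$ is locally free of positive rank over $C$ and $C \to T$ is surjective (being a flat projective family of curves), multiplication by $\lambda$ on $E$ equals the identity only if $\lambda = 1$ in $\O(T)$; here it is convenient to test on $E$ rather than on $F$, because $F$ could in principle be zero or have torsion, whereas $E$ is locally free and faithfully detects the scalar. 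This gives injectivity of $\GG(T) \to \Aut(s)$.

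The main obstacle, and really the only subtle point, is compatibility with pullbacks, which is what makes the rigidification machinery of~\cite{acv} applicable. Given a morphism of affine schemes $T' \to T$ with induced unit $\lambda' \in \O(T')^\times$ the image of $\lambda$, I must check that pulling back the automorphism attached to $\lambda$ along $T' \to T$ yields the automorphism attached to $\lambda'$ on the pulled-back family $s|_{T'}$. This follows because pullback of sheaves is $\O$-linear and compatible with multiplication by functions: the pullback of $\lambda\cdot\id_F$ is $\lambda'\cdot\id_{F|_{T'}}$, and likewise for $E$, while the pullback of $\id_C$ is $\id$ on the base-changed curve. I expect this verification to be routine functoriality, so the essential content of the lemma is pinning down the correct construction (scaling $F$ and $E$ simultaneously) in the first place.
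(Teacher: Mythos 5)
Your construction---simultaneous multiplication by the scalar on both $F$ and $E$, with the identity on the curve---is exactly the paper's proof, which compresses it into one line (``any $r \in R^\times$ canonically induces an automorphism of $s$ by multiplication''). Your injectivity argument, testing the scalar on the locally free sheaf $E$ and using surjectivity (faithful flatness) of $C \rightarrow T$, is a slightly more careful rendering of the paper's terse justification that $F$ and $E$ are flat $R$-modules, so the two proofs are essentially the same.
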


\begin{proof}
Assume $T = \Spec R$; then $\GG(T)$ is the group of invertible elements of $R$, $R^\times$. But any $r \in R^\times$ canonically induces an automorphism of $s$ by multiplication. The induced group map $\GG(T) \rightarrow \Aut(s)$ is injective because $F$ and $E$ are flat $R$-modules.
\end{proof}

\begin{rem}
With an analogous argument one can check that for any $n \in \N(T)$ there is a canonical injection $\GG(T) \rightarrow \Aut(n)$ which is compatible with pullbacks.
\end{rem}

\begin{prop}\label{PrigMor}
Let $\S \rightarrow \S^\GG$ be the rigidification of $\S$ along $\GG$ and let $\N \rightarrow \N^\GG$ be the rigidification of $\N$ along $\GG$. Then there exists a unique morphism (up to unique 2-isomorphism) $\tilde G : \S^\GG \rightarrow \N^\GG$ such that the following diagram is 2-cartesian:
\[
\begin{tikzcd}
\S \arrow[r] \arrow[d, "G"] & \S^\GG \arrow[d, "\tilde G"] \\
\N \arrow[r] & \N^\GG
\end{tikzcd} \,.
\]
\end{prop}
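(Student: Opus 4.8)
The plan is to produce $\tilde G$ from the universal property of rigidification and then to recognise the resulting comparison morphism as a banding-preserving morphism of $\GG$-gerbes, which is automatically an equivalence.

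First I would construct $\tilde G$. Write $\rho_\S : \S \to \S^\GG$ and $\rho_\N : \N \to \N^\GG$ for the two rigidification morphisms. I would consider the composite $\rho_\N \circ G : \S \to \N^\GG$ and check that it trivialises the $\GG$-action on $\S$: for every affine $T$ and every $s \in \S(T)$ the morphism $G$ sends the scalar automorphism of $s$ attached to $r \in \GG(T)$ to the corresponding scalar automorphism of $G(s) \in \N(T)$, compatibly with the injections $\GG(T) \hookrightarrow \Aut(s)$ and $\GG(T) \hookrightarrow \Aut(G(s))$ of the preceding Lemma and Remark, and $\rho_\N$ kills the latter by definition. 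Hence, by the universal property of the rigidification $\rho_\S$ \cite[Section~5.1]{acv}, the morphism $\rho_\N \circ G$ factors through $\rho_\S$, uniquely up to unique $2$-isomorphism; this yields $\tilde G : \S^\GG \to \N^\GG$ together with a $2$-isomorphism $\tilde G \circ \rho_\S \cong \rho_\N \circ G$, i.e. the asserted $2$-commutative square. Since a $2$-cartesian square is in particular $2$-commutative, any $\tilde G$ making the square $2$-cartesian is forced to factor $\rho_\N \circ G$, so the uniqueness asserted in the statement is a formal consequence of the uniqueness in this universal property.

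To prove that the square is $2$-cartesian, I would set $P := \S^\GG \times_{\N^\GG} \N$ and let $\Phi : \S \to P$ be the morphism over $\S^\GG$ induced by $\rho_\S$, $G$ and the structural $2$-isomorphism; the goal becomes showing that $\Phi$ is an equivalence. The key structural observation is that both $\rho_\S : \S \to \S^\GG$ and the projection $\mathrm{pr} : P \to \S^\GG$ are gerbes banded by $\GG$: the former by the defining property of rigidification, and the latter because it is the base change along $\tilde G$ of the $\GG$-gerbe $\rho_\N$, gerbes together with their banding being stable under base change.

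Finally I would verify that $\Phi$ respects the two $\GG$-bandings and then invoke the standard fact that a morphism of gerbes over a common base, banded by the same group and compatible with the bandings, is an equivalence. Concretely, an object of $P$ over $T$ is a triple $(a,b,\gamma)$ with $a \in \S^\GG(T)$, $b \in \N(T)$ and $\gamma : \tilde G(a) \xrightarrow{\sim} \rho_\N(b)$, and the banding of $\mathrm{pr}$ sends $r \in \GG(T)$ to the automorphism $(\id_a,\, r\cdot)$ of $(a,b,\gamma)$, which is well defined precisely because $\rho_\N$ trivialises scalar multiplication. Since $\Phi(s) = (\rho_\S(s), G(s), \mathrm{can})$ carries the scalar automorphism $r\cdot$ of $s$ to $(\rho_\S(r\cdot),\, G(r\cdot)) = (\id,\, r\cdot)$, the map $\Phi$ is banding-compatible, hence an equivalence, so the square is $2$-cartesian. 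I expect the only genuine obstacle to lie in this last step: correctly identifying the induced $\GG$-banding on the fibre product $P$ and matching it on inertia with the effect of $\Phi$, while making sure the $2$-isomorphism used to define $\Phi$ is exactly the one furnished by the universal property, so that $\Phi$ really is a morphism over $\S^\GG$. Once banding compatibility is in place the conclusion is purely formal.
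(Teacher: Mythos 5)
Your proposal is correct and takes essentially the same route as the paper, whose entire proof is the one-line appeal to the universal property of the rigidification $\S \rightarrow \S^\GG$; your construction of $\tilde G$ (checking that $G$ carries the scalar automorphisms of $s$ to those of $G(s)$, so that $\rho_\N \circ G$ factors uniquely through $\rho_\S$) is exactly that argument made explicit. Your further verification that the square is 2-cartesian --- recognizing both $\S \rightarrow \S^\GG$ and $\S^\GG \times_{\N^\GG} \N \rightarrow \S^\GG$ as gerbes banded by $\GG$ and checking that the comparison morphism respects the bandings, hence is an equivalence --- is a correct, standard completion of the detail the paper leaves implicit.
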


\begin{proof}
That is a consequence of the universal property of the rigidification $\S \rightarrow \S^\GG$.
\end{proof}

\begin{lemma}\label{Lhi}
Let $n := (C \rightarrow \Spec k, F, E) \in \N(k)$ and let $\lambda \in \GG(k)$ be a (nonzero) scalar. Then the automorphism induced on $\Ext^i(F, E)$ (for $i = 0,1$) by acting simultaneously on $E$ and on $F$ with the scalar $\lambda$ is the identity.
\end{lemma}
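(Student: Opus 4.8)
The plan is to make the $\GG$-action on $\Ext^i(F,E)$ explicit and exhibit it as a product of two scalar actions that cancel. First I would recall that, by the preceding remark, the scalar $\lambda \in \GG(k) = k^\times$ corresponds to the automorphism $\sigma_\lambda = (\lambda\,\id_F, \lambda\,\id_E) \in \Aut(n)$ of the pair, i.e.\ multiplication by $\lambda$ on each of $F$ and $E$ simultaneously. The induced automorphism of $\Ext^i(F,E)$ is then the one coming from the bifunctoriality of $\Ext^i(-,-)$, which is covariant in the second argument $E$ and contravariant in the first argument $F$; to obtain a genuine left action of $\Aut(n)$ the contravariant slot must receive the inverse automorphism, so $\sigma_\lambda$ acts as the composite $(\lambda\,\id_E)_* \circ (\lambda^{-1}\,\id_F)^*$.

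The computation is then immediate. For $i = 0$ it is completely transparent: on $\Hom(F,E)$ the automorphism $\sigma_\lambda$ sends $\psi$ to $(\lambda\,\id_E)\circ\psi\circ(\lambda\,\id_F)^{-1} = \lambda\,\lambda^{-1}\,\psi = \psi$, since scaling both the source and the target of a homomorphism by the same unit leaves it unchanged. For general $i$ I would use that $\Ext^i(-,-)$ is $k$-bilinear: because $\Ext^i(F,-)$ is $k$-linear and covariant, the endomorphism $\lambda\,\id_E$ of $E$ induces multiplication by $\lambda$ on $\Ext^i(F,E)$; because $\Ext^i(-,E)$ is $k$-linear and contravariant, the endomorphism $\lambda^{-1}\,\id_F$ of $F$ induces multiplication by $\lambda^{-1}$. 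Their composite is multiplication by $\lambda\,\lambda^{-1} = 1$, i.e.\ the identity, for both $i = 0$ and $i = 1$.

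The only point requiring care is the appearance of the inverse on the $F$-side: it is precisely the contravariance of $\Ext^i$ in its first argument, together with the convention that an automorphism acts by pullback along its inverse, that turns the $F$-contribution into multiplication by $\lambda^{-1}$ rather than $\lambda$, producing cancellation rather than multiplication by $\lambda^2$. To leave no ambiguity I would justify this by computing $\Ext^i(F,E)$ from a resolution equipped with its $\GG$-equivariant structure: realizing $\Ext^i(F,E)$ as the cohomology of a $\Hom$-complex on which $\sigma_\lambda$ acts term by term by the conjugation $\psi \mapsto (\lambda\,\id)\circ\psi\circ(\lambda\,\id)^{-1}$, one sees the same cancellation on each term and hence on cohomology. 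This is the main (and essentially only) subtlety; everything else is formal functoriality.
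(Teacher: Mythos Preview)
Your proof is correct and follows essentially the same line as the paper's: covariance of $\Ext^i$ in $E$ contributes a factor of $\lambda$, contravariance in $F$ contributes a factor of $\lambda^{-1}$, and these cancel. You are simply more careful than the paper in spelling out why the $F$-contribution is $\lambda^{-1}$ rather than $\lambda$ (the left-action convention and the conjugation description via a resolution), but this is elaboration of the same argument rather than a different approach.
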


\begin{proof}
Since the functor $\Ext^i$ is contravariant in the first variable and covariant in the second, the scalar automorphism $\lambda$ applied to the first variable acts as $\lambda^{-1}$, and applied to the second variable it acts as $\lambda$. 
\end{proof}

\begin{prop}\label{POstrScend}
Let $X$ be an algebraic stack and let $p : \X \rightarrow X$ be a gerbe banded by $\GG$. Define
\begin{align*}
\mathcal T := \left\{\E^\bullet \in \D^-_{\Coh}(\X) \left|
		\begin{aligned}
 		p^*p_* \h^i(\E^\bullet) \cong \h^i(\E^\bullet)
		\end{aligned}
		\right. \right\} \,.
\end{align*}
Then the derived functor $\LL p^* : \D^-_{\Coh}(X) \rightarrow \D^-_{\Coh}(\X)$ induces an equivalence of categories $\D^-_{\Coh}(X) \cong \mathcal T$.
\end{prop}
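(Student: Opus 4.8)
The engine of the proof is the weight decomposition of coherent sheaves on a $\GG$-gerbe, so I would begin there. Since $p : \X \to X$ is banded by $\GG$, the band supplies a canonical central action of $\GG$ on every quasi-coherent $\O_\X$-module, functorial in the module; as $\GG$ is diagonalizable, this action is the same datum as a $\Z$-grading by weight. Restricting to coherent sheaves I would record the resulting decomposition $\Coh(\X) = \bigoplus_{n \in \Z} \Coh(\X)_n$ into the full subcategories of sheaves of pure weight $n$, noting that it is orthogonal: every morphism of sheaves is $\GG$-equivariant, hence weight-preserving, so $\Hom(\mathcal F, \mathcal G) = 0$ whenever $\mathcal F$ and $\mathcal G$ have distinct pure weights. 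Here each coherent sheaf involves only finitely many weights by finite generation. Étale-locally $\X \cong X \times \B\GG$ and this is simply the weight decomposition of $\GG$-representations; being canonical, it glues.

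Next I would pin down the two functors against this grading. As $p$ is smooth, $\LL p^* = p^*$ is exact, and $p^*\mathcal G$ carries the trivial $\GG$-action, so $p^*$ factors through the weight-zero part $\Coh(\X)_0$. Dually, $p_*\mathcal F$ is the $\GG$-invariant summand $\mathcal F_0$, and linear reductivity of $\GG$ gives $\R^i p_* = 0$ for $i > 0$; thus $\R p_* = p_*$ is exact and preserves both coherence and boundedness above. Using $\H^0(\B\GG, \O) = \KK$ and $\H^{>0}(\B\GG, \O) = 0$, the unit and counit of the adjunction $\LL p^* \dashv \R p_*$ become isomorphisms $p_* p^* \cong \id_{\Coh(X)}$ and $p^* p_* \mathcal F \cong \mathcal F_0$. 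In particular $p^*$ and $p_*$ are mutually inverse exact equivalences between $\Coh(X)$ and $\Coh(\X)_0$, and the defining condition $p^* p_* \mathcal F \cong \mathcal F$ is equivalent to $\mathcal F$ being of pure weight $0$.

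Finally I would pass to the derived category using only the adjunction $\LL p^* \dashv \R p_*$, avoiding any claim that the whole of $\D^-_{\Coh}(\X)$ splits as a direct sum over weights (which fails, since a single bounded-above complex can carry unboundedly many weights). Both functors are exact, hence computed termwise: $\h^i(\LL p^* \E^\bullet) = p^*\h^i(\E^\bullet)$ is of pure weight $0$, so $\LL p^*$ lands in $\mathcal T$; and $\h^i(\R p_* \E^\bullet) = p_*\h^i(\E^\bullet)$, so $\R p_*$ maps $\mathcal T$ into $\D^-_{\Coh}(X)$. For full faithfulness I would check that the unit $\id \to \R p_* \LL p^* = p_* p^*$ is an isomorphism, which holds on each $\h^i$ by $p_* p^* \cong \id$. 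For essential surjectivity onto $\mathcal T$ I would check that, for $\E^\bullet \in \mathcal T$, the counit $\LL p^* \R p_* \E^\bullet \to \E^\bullet$ is a quasi-isomorphism: on $\h^i$ it is the map $p^* p_* \h^i(\E^\bullet) \to \h^i(\E^\bullet)$, which is exactly the defining isomorphism of $\mathcal T$. Hence $\LL p^*$ restricts to an equivalence $\D^-_{\Coh}(X) \cong \mathcal T$ with quasi-inverse $\R p_*$.

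The main obstacle is the first step: producing the canonical weight grading on $\Coh(\X)$ for a gerbe that is only locally trivial, and verifying that the local $\GG$-actions coming from the band glue to a genuine global $\Z$-grading. The only other nonformal input is the exactness of $p_*$, equivalently the vanishing $\R^{>0} p_* = 0$, which rests solely on the linear reductivity of $\GG$ and so requires no hypothesis on the characteristic. Once these two facts are secured, the derived statement is purely formal.
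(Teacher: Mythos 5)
Your proposal is correct and its derived-category core --- exactness of $p^* = \LL p^*$ (flatness of a gerbe) and of $p_* = \R p_*$, followed by checking the unit on complexes over $X$ and the counit on objects of $\mathcal T$ cohomology-sheaf by cohomology-sheaf --- is exactly the argument the paper gives. The only difference is one of detail, not of route: where the paper dismisses the two sheaf-level inputs (exactness of $p_*$, and $E \cong p_*p^*E$ for $E \in \Qcoh(X)$) with ``one can check'', you substantiate them via the weight decomposition induced by the band and the linear reductivity of $\GG_m$, which is the standard justification the paper implicitly relies on.
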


\begin{proof}
By definition of gerbe the morphism $p : \X \rightarrow X$ is flat, hence the derived functor $\LL p^* : \D^-_{\Coh}(X) \rightarrow \D^-_{\Coh}(\X)$ is just the ordinary pullback $p^*$. One can check that also $p_*$ is exact. %and it maps any coherent sheaf to a coherent sheaf.
Hence for any $\E^\bullet \in \D^-_{\Coh}(\X)$ we have that $\h^i(p^*p_*\E^\bullet) \cong p^*p_*(\h^i(\E^\bullet))$. Therefore if $\E^\bullet \in \mathcal T$ then the canonical morphism $p^*p_*\E^\bullet \rightarrow \E^\bullet$ is an isomorphism in $\D^-_{\Coh}(\X)$. On the other hand one can check that for any $E \in \Qcoh(X)$ the canonical morphism $E \rightarrow p_*p^*E$ is an isomorphism. Hence for any complex $E^\bullet \in \D^-_{\Coh}(X)$ the canonical morphism $E^\bullet \rightarrow p_*p^*E^\bullet$ is an isomorphism.
\end{proof}

\begin{coro}\label{Cdescends}
The obstruction theory $E^\bullet \rightarrow \tau_{\geq -1}\Left_G$ (introduced in \ref{PobTh}) canonically induces a perfect obstruction theory for the rigidified morphism $\tilde G : \S^\GG \rightarrow \N^\GG$ (see~\ref{PrigMor}).
\end{coro}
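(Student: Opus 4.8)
The plan is to descend the obstruction theory along the rigidification gerbe using the equivalence of Proposition~\ref{POstrScend}. Write $p : \S \to \S^\GG$ and $q : \N \to \N^\GG$ for the rigidification morphisms and recall from Proposition~\ref{PrigMor} that the square relating $G$ and $\tilde G$ is 2-cartesian; in particular $\S \cong \N \times_{\N^\GG} \S^\GG$. Since gerbes banded by $\GG$ are flat, base change for the relative cotangent complex yields a canonical isomorphism $\Left_G \cong \LL p^* \Left_{\tilde G}$. As $p$ is flat, $\LL p^* = p^*$ is exact and therefore commutes with the truncation $\tau_{\geq -1}$, so $\tau_{\geq -1}\Left_G \cong p^* \tau_{\geq -1}\Left_{\tilde G}$. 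Consequently $\tau_{\geq -1}\Left_G$ lies in the essential image of $p^*$, i.e.\ in the subcategory $\mathcal T$ of Proposition~\ref{POstrScend}, and its descent is precisely $\tau_{\geq -1}\Left_{\tilde G}$.

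Next I would check that $E^\bullet = \R\bar\pi_*(\bar\F \otimes \bar\E^\vee \otimes \bar\omega[1])$ also lies in $\mathcal T$, that is, $p^* p_* \h^i(E^\bullet) \cong \h^i(E^\bullet)$ for all $i$. By Proposition~\ref{POstrScend} this is equivalent to the inertial $\GG$-action being trivial on each cohomology sheaf $\h^i(E^\bullet)$. The inertial action on $\S$ scales $\bar\F$ and $\bar\E$ simultaneously by a scalar $\lambda$, hence scales $\bar\F \otimes \bar\E^\vee$ by $\lambda \cdot \lambda^{-1} = 1$; it therefore acts trivially on $\bar\F \otimes \bar\E^\vee \otimes \bar\omega$ and thus on $E^\bullet$ and all its cohomology. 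Fibrewise this is exactly the content of Lemma~\ref{Lhi}, the triviality of the scalar action on $\Hom(F,E) = \Ext^0(F,E)$ and $\Ext^1(F,E)$, which by Corollary~\ref{cTanOb} govern the cohomology of $E^\bullet$. Hence $E^\bullet \in \mathcal T$ and it descends to a complex $\tilde E^\bullet \in \D^-_{\Coh}(\S^\GG)$ with $p^*\tilde E^\bullet \cong E^\bullet$.

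With both $E^\bullet$ and $\tau_{\geq -1}\Left_G$ in $\mathcal T$, the equivalence of Proposition~\ref{POstrScend} transports the obstruction theory morphism $\xi : E^\bullet \to \tau_{\geq -1}\Left_G$ to a canonical morphism $\tilde\xi : \tilde E^\bullet \to \tau_{\geq -1}\Left_{\tilde G}$ in $\D^-_{\Coh}(\S^\GG)$, characterised by $p^*\tilde\xi \cong \xi$. It then remains to verify that $\tilde\xi$ is a perfect obstruction theory. Since $p$ is a gerbe, hence faithfully flat, $p^*$ is exact and conservative; from $p^* \h^i(\tilde\xi) \cong \h^i(\xi)$ together with the fact that $\xi$ is an obstruction theory one deduces that $\h^0(\tilde\xi)$ is an isomorphism and $\h^{-1}(\tilde\xi)$ is surjective. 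Likewise perfectness and the amplitude bound $[-1,0]$ descend along the fppf morphism $p$, because $p^*\tilde E^\bullet \cong E^\bullet$ is perfect of perfect amplitude contained in $[-1,0]$. Thus $\tilde\xi$ is the desired perfect obstruction theory for $\tilde G : \S^\GG \to \N^\GG$.

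The step I expect to be the main obstacle is placing $E^\bullet$ in $\mathcal T$: one must correctly identify the inertial $\GG$-action arising from the gerbe structure with the simultaneous scaling of $\bar\F$ and $\bar\E$ and confirm that it is trivial on cohomology, which is precisely where Lemma~\ref{Lhi} is indispensable. A secondary technical point is justifying base change for the relative cotangent complex along the flat gerbe $q$, needed both to place $\tau_{\geq -1}\Left_G$ in $\mathcal T$ and to identify its descent with $\tau_{\geq -1}\Left_{\tilde G}$.
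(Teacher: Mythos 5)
Your proposal is correct and follows essentially the same route as the paper's proof: descend $E^\bullet$ along $p : \S \rightarrow \S^\GG$ via the equivalence of Proposition~\ref{POstrScend} (using triviality of the simultaneous scalar action, Lemma~\ref{Lhi}), identify $\Left_G \cong p^*\Left_{\tilde G}$, transport the morphism, and check that being a perfect obstruction theory descends. The only differences are cosmetic: you argue triviality of the band action directly by weight cancellation on $\bar\F \otimes \bar\E^\vee \otimes \bar\omega$ where the paper invokes Lemma~\ref{Lhi} plus Nakayama, and you justify $\Left_G \cong p^*\Left_{\tilde G}$ by flat base change along the 2-cartesian square where the paper cites smoothness of $p$; you also spell out the final verification that the paper leaves as ``one can check.''
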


\begin{proof}
Recall that $E^\bullet := \R\pi_*(\bar \F \otimes \bar \E^\vee \otimes \bar \omega[1])$. Let $p : \S \rightarrow \S^\GG$ be the rigidification of $\S$ along $\GG$. By Proposition~\ref{POstrScend}, Lemma~\ref{Lhi} and Nakayama's Lemma there exists a unique $\tilde E^\bullet \in \D^{[-1,0]}_{\Coh}(\S^\GG)$ such that $E^\bullet \cong p^* \tilde E^\bullet$. Moreover $\Left_G \cong p^*\Left_{\tilde G}\,,$ since the morphism $p$ is smooth. Hence the morphism $E^\bullet \rightarrow \tau_{\geq -1}\Left_G$ induces a morphism $\tilde E^\bullet \rightarrow \tau_{\geq -1}\Left_{\tilde G}$, again by Proposition~\ref{POstrScend}. One can check that $\tilde E^\bullet \rightarrow \tau_{\geq -1}\Left_{\tilde G}$ is a perfect obstruction theory for $\tilde G : \S^\GG \rightarrow \N^\GG$.
\end{proof}

%\begin{rem}
%\note{Questo remark non e' da inserire nell'articolo, ma serve a me per ricordare le cose. }Let $X$ be an algebraic stack and let $p : \X \rightarrow X$ be a gerbe banded by $\GG$. Let $\F \in \Coh(\X)$. By Nakayama Lemma we deduce that the two following conditions are equivalent:
%\begin{enumerate}
%\item[$(i)$] the canonical morphism $p^*p_* \F \rightarrow \F$ is an isomorphism;
%\item[$(ii)$] for any $y : \Spec k \rightarrow X$ let $\X_y := \X \times_X \Spec k$ and let $F_y := y^*p_*\F$ ($F_y$ is a $k$-vector space) then $\F_{|\X_y} \cong F_y \otimes \O_{\X_y}$ (i.e.\ $\F_{|\X_y}$ is the vector space $F_y$ with the trivial comodule structure).
%\end{enumerate}
%Hence $\E^\bullet \in \mathcal T$ if and only if for any $y \in X$ we have that $\h^i(\E^\bullet)_{|\X_y}$ is a trivial $\O(\GG)$-comodule.
%\end{rem}

\section{Applications}

In this section we prove that the relative obstruction theory defined in Section~\ref{SobTh} induces a perfect obstruction theory for the moduli spaces of $\alpha$-stable coherent systems.

Throughout this section we will assume that the universal sheaf $\F$ of $\N$ (see~\ref{NunivFamS} and~\ref{NunivFamN}) is locally free. Hence the universal sheaf $\bar \F$ of $\S$ is locally free, too.

\begin{defi}
Let $s \in \S(T)$ and let $m \in \M(T)$ be the image of $s$ via the forgetful morphism $F : \S \rightarrow \M$. Let
\[
\Aut(s/\id_m) := \{\psi \in \Aut(s) \,|\, F(\psi) = \id_m\} \,.
\]
We say that $s$ is \textit{simple} if $\iota_s : \GG(T) \rightarrow \Aut(s/\id_m)$ is an isomorphism.
\end{defi}

\begin{notation}
The symbol $\S_{smp}$ denotes the stack of simple generalized coherent systems. Note that the canonical morphism $\S_{smp} \rightarrow \S$ is an open embedding.

With an abuse of notation, we still use the letter $G$ to denote the morphism $G : \S_{smp} \rightarrow \N$, which is the restriction of the forgetful morphism $\S \rightarrow \N$ to $\S_{smp} \,$.
\end{notation}

\begin{rem}
The forgetful morphism $\N \rightarrow \M$ (see~\ref{DstackN}) factors through the rigidification $\N^\GG$ (see~\ref{PrigMor}). Notice that the morphism $\S^\GG_{smp} \rightarrow \M$ is representable. Moreover the morphism $\N^\GG \rightarrow \M$ is smooth, since $\N \rightarrow \M$ is smooth.
\end{rem}

\begin{prop}
The obstruction theory $E^\bullet \rightarrow \tau_{\geq -1}\Left_G$ (introduced in \ref{PobTh}) canonically induces a perfect obstruction theory for the morphism $\S^\GG_{smp} \rightarrow \M$.
\end{prop}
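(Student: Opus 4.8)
The plan is to upgrade the \emph{relative} obstruction theory over $\N^\GG$ furnished by Corollary~\ref{Cdescends} to an obstruction theory over $\M$, exploiting that the structure morphism $\N^\GG \to \M$ is smooth. Write $f : \S^\GG_{smp} \to \N^\GG$ for the restriction of the rigidified morphism $\tilde G$ (see~\ref{PrigMor}) and $g : \N^\GG \to \M$ for the smooth morphism of the preceding remark, so that the morphism under consideration is the composite $g \circ f$; by that remark $g\circ f$ is representable, hence a Deligne--Mumford morphism, and the framework of obstruction theories applies to it. First I would restrict the perfect relative obstruction theory $\tilde E^\bullet \to \tau_{\geq -1}\Left_{\tilde G}$ of~\ref{Cdescends} along the open immersion $\S^\GG_{smp} \hookrightarrow \S^\GG$. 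Since the cotangent complex is compatible with open immersions, $\tau_{\geq -1}$ commutes with this flat restriction, and the conditions on $\h^0$, $\h^{-1}$ and perfect amplitude are local, this yields a perfect relative obstruction theory $\phi : E^\bullet \to \tau_{\geq -1}\Left_f$ for $f$.

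The mechanism for passing to the absolute theory is the transitivity triangle of cotangent complexes for $\S^\GG_{smp} \xrightarrow{f} \N^\GG \xrightarrow{g} \M$,
\[
\LL f^*\Left_g \longrightarrow \Left_{g\circ f} \longrightarrow \Left_f \xrightarrow{\;+1\;}.
\]
As $g$ is smooth, $\Left_g \cong \Omega_g$ is locally free and concentrated in degree $0$, so $\LL f^*\Left_g = f^*\Omega_g$ is a vector bundle placed in degree $0$. Because its leftmost term is a sheaf in degree $0$, I would argue by a standard $3\times 3$-diagram in the derived category, comparing the canonical truncation triangles of $\Left_{g\circ f}$ and $\Left_f$, that applying $\tau_{\geq -1}$ preserves this triangle, producing a distinguished triangle
\[
f^*\Omega_g \longrightarrow \tau_{\geq -1}\Left_{g\circ f} \longrightarrow \tau_{\geq -1}\Left_f \xrightarrow{\;\bar\partial\;} f^*\Omega_g[1].
\]

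Next I would define the candidate absolute obstruction complex $\hat E^\bullet$ by the distinguished triangle
\[
f^*\Omega_g \longrightarrow \hat E^\bullet \longrightarrow E^\bullet \xrightarrow{\;\bar\partial \circ \phi\;} f^*\Omega_g[1],
\]
whose connecting morphism is the composite of $\phi$ with the boundary map $\bar\partial$ above. By construction the identity of $f^*\Omega_g$ together with $\phi : E^\bullet \to \tau_{\geq -1}\Left_f$ are compatible with the two connecting maps, so axiom (TR3) completes them to a morphism of distinguished triangles and, in particular, to a morphism $\hat\phi : \hat E^\bullet \to \tau_{\geq -1}\Left_{g\circ f}$. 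Since $\hat E^\bullet$ is an extension of the perfect complex $E^\bullet$ of amplitude $[-1,0]$ by the locally free sheaf $f^*\Omega_g$ in degree $0$, it is again perfect of amplitude in $[-1,0]$, as required.

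The only substantive point, which I expect to be the main obstacle, is to check that $\hat\phi$ is an obstruction theory, i.e.\ that $\h^0(\hat\phi)$ is an isomorphism and $\h^{-1}(\hat\phi)$ is surjective. This I would deduce from the morphism of long exact cohomology sequences attached to the two triangles above: the outer vertical arrows are the identity of $f^*\Omega_g$ and the maps $\h^i(\phi)$, which form an isomorphism in degree $0$ and a surjection in degree $-1$ by the hypothesis on the relative theory, so the five lemma (in the version giving an isomorphism on $\h^0$ and a surjection on $\h^{-1}$) yields the claim. Finally I would observe that the transitivity triangle, the boundary $\bar\partial$ and the map $\phi$ are all canonical, so $\hat\phi$ is independent of the auxiliary resolutions entering~\ref{PobTh} and~\ref{Cdescends}, and is therefore genuinely induced by the obstruction theory $E^\bullet \to \tau_{\geq -1}\Left_G$.
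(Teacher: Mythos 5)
Your argument is correct, and it reaches the absolute theory by a genuinely different technical route than the paper. The paper never touches the truncated transitivity triangle: instead it invokes Proposition~\ref{Pmorph}, which (using the standing assumption of this section that the universal sheaf $\F$ is locally free, via Grothendieck duality) upgrades the relative obstruction theory to a canonical morphism $E^\bullet \to \Left_G$ into the \emph{full} cotangent complex; after descending to the rigidification as in Corollary~\ref{Cdescends}, the cone construction is performed against the untruncated triangle $\LL \tilde G^*\Left_q \to \Left_F \to \Left_{\tilde G}$, one truncates only at the very end, and the verification that the result is an obstruction theory is left as ``one can check''. You instead stay with $\tau_{\geq -1}$-truncations throughout, and the price is exactly the step you flagged: proving that $\tau_{\geq -1}$ of the transitivity triangle is still distinguished. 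Your justification is sound --- since $g$ is smooth, $\LL f^*\Left_g \cong f^*\Omega_g$ is a vector bundle in degree $0$, and an octahedron (equivalently your $3\times 3$) argument produces the truncated triangle compatibly with the truncation maps --- and your five-lemma check makes explicit precisely the verification the paper omits. What each approach buys: yours is more self-contained, avoids Proposition~\ref{Pmorph} entirely, and hence does not need local freeness of $\F$ for this step, so it would apply verbatim without that hypothesis; the paper's route produces the stronger intermediate object, a morphism to the untruncated $\Left_{\tilde G}$, which sidesteps the truncation subtlety and is reusable for other purposes. One caveat applies equally to both arguments: the fill-in provided by axiom (TR3) is not unique, so the asserted canonicity of the induced morphism deserves more care than either you or the paper supplies; your closing remark acknowledges the choices involved but does not actually resolve this point.
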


\begin{proof}
Let $q : \N^\GG \rightarrow \M$; since $q$ is smooth, the complex $\Left_q$ is perfect. Denote by $F : \S_{smp}^\GG \rightarrow \M$ the composition $q \circ \tilde G$ (see~\ref{PrigMor} for the definition of $\tilde G : \S^\GG \rightarrow \N^\GG$). By the properties of the cotangent complex we have a distinguished triangle
\[
\LL \tilde G^* \Left_q \rightarrow \Left_F \rightarrow \Left_{\tilde G} \rightarrow \tilde \LL G^* \Left_q[1] \,.
\]
By Proposition~\ref{Pmorph} the obstruction theory $E^\bullet \rightarrow \tau_{\geq -1}\Left_G$ is induced by a canonical morphism $E^\bullet \rightarrow \Left_G$. This morphism descends to the rigidification, as described in Corollary~\ref{Cdescends}. Hence we get a canonical morphism $\tilde E^\bullet \rightarrow \Left_{\tilde G}$ which induces the perfect obstruction theory for $\tilde G : \S^\GG \rightarrow \N^\GG$. Let $E'$ denote the mapping cone of such morphism, shifted by $-1$. By the axioms of the triangulated categories we obtain a morphism $E' \rightarrow \Left_F$ and therefore a morphism $E' \rightarrow \tau_{\geq -1}\Left_F$. One can check that it induces a perfect obstruction theory for the morphism $\S_{smp}^\GG \rightarrow \M$. 
\end{proof}

\begin{rem}\label{rStackCS}
Fix a triple of integers $(n, d, k)$. Let $\BGL_k$ be the quotient stack $[\Spec\KK/\GL_k]$. Let $\VecM^{n,d}$ be the open substack of $\VecM$ whose $T$-points are pairs $(C \rightarrow T, E)$, where $C \rightarrow T$ is an $\M$-family of curves and $E \in \Coh(C)$ is locally free of rank $n$ and degree $d$ (see~\ref{DstackN}). There is a natural morphism $\BGL_k \times \VecM^{n,d} \rightarrow \N$ such that for any scheme $T$ we have
\begin{align*}
\BGL_k(T) \times \VecM^{n,d}(T) &\rightarrow \N(T) \,, \\
(V, (p: C \rightarrow T, E)) &\mapsto (p: C \rightarrow T, p^*V, E) \,.
\end{align*}
Let $\S_1 := \S \times_\N (\BGL_k \times \VecM^{n,k})$. Define a stack $\SCS$ as the open substack of $\S_1$ whose $T$-points are families $(V, (p: C \rightarrow T, E), \varphi : p^*V \rightarrow E)$ where $(V, (p: C \rightarrow T, E)) \in \BGL_k(T) \times \VecM^{n,d}(T)$ and $p^*V \rightarrow E$ is an injective morphism.
\end{rem}

\begin{defi}
We call the stack $\SCS$ introduced in Remark~\ref{rStackCS} the \textit{moduli stack of coherent systems}.
\end{defi}

\begin{coro}\label{CObCohSys}
For any smooth, projective, genus $g$ curve $C$ and for any triple $(n,d,k)$ the moduli space of simple coherent systems of type $(n,d,k)$ has a perfect obstruction theory of rank
\[
\beta := n^2(g - 1) + 1 - k(k - d + n(g - 1)) \,.
\]
\end{coro}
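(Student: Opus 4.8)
The plan is to read off the statement from the perfect obstruction theory for $\S^\GG_{smp}\to\M$ constructed in the Proposition preceding Remark~\ref{rStackCS}, by specialising the ground stack to the single curve $C$, restricting to coherent systems of type $(n,d,k)$, and computing the rank from Corollary~\ref{cTanOb} together with the dimensions of the relevant base stacks.

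First I would use the freedom in the choice of $\M$. A single smooth projective curve $C\to\Spec\KK$ is a flat projective relatively Gorenstein family of relative dimension $1$, so I may take $\M=\Spec\KK$ with $\M'=C$. Then $\N$ is the stack of pairs $(F,E)$ of sheaves on $C$, the stack $\S$ parametrises generalized coherent systems on $C$, and the morphism $\S^\GG_{smp}\to\M$ becomes a morphism to $\Spec\KK$; hence the perfect obstruction theory it carries is \emph{absolute}.

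Next I would cut out the coherent systems of the prescribed type. Base-changing the relative obstruction theory for $G:\S\to\N$ of Proposition~\ref{PobTh} along the morphism $\BGL_k\times\VecM^{n,d}\to\N$ of Remark~\ref{rStackCS}, and restricting to the open locus where $\varphi$ is injective, gives a relative perfect obstruction theory for $\SCS\to\BGL_k\times\VecM^{n,d}$ (relative obstruction theories and perfection are stable under base change and under restriction to opens). By Lemma~\ref{Lhi} it descends along the rigidification, and since $\BGL_k\times\VecM^{n,d}$ is smooth over $\Spec\KK$, combining it with the smooth structure morphism through the cotangent-complex triangle — exactly the recipe of the Proposition preceding Remark~\ref{rStackCS}, now with $\N$ replaced by $\BGL_k\times\VecM^{n,d}$ — produces an absolute perfect obstruction theory on $M:=\SCS^\GG_{smp}$. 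This $M$ is an algebraic space, since a simple coherent system has automorphism group exactly the scalars $\GG$, which the rigidification removes.

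Finally I would compute the rank. By Corollary~\ref{cTanOb} the relative theory for $G$ has tangent $\Hom(F,E)$ and obstruction $\Ext^1(F,E)$; for a coherent system $F=V\otimes\O_C$ these are $V^\vee\otimes\H^0(E)$ and $V^\vee\otimes\H^1(E)$, so the relative rank equals $\chi(V^\vee\otimes E)=k\,\chi(E)=k\bigl(d-n(g-1)\bigr)$. The base $\VecM^{n,d}$ is the smooth stack of rank $n$, degree $d$ bundles on $C$, of dimension $n^2(g-1)$, while $\BGL_k=[\Spec\KK/\GL_k]$ contributes $-k^2$ and the rigidification by $\GG$ adds $1$; summing,
\[
k\bigl(d-n(g-1)\bigr)+\bigl(n^2(g-1)-k^2\bigr)+1 = n^2(g-1)+1-k\bigl(k-d+n(g-1)\bigr)=\beta .
\]
I expect the main obstacle to be this bookkeeping together with the relative-to-absolute passage: one must verify that the construction survives both the base change to the coherent-systems locus and the descent along the $\GG$-gerbe while remaining of amplitude $[-1,0]$, and that the $-k^2$ coming from the classifying factor $\BGL_k$ and the $+1$ coming from the rigidification are each counted exactly once, so that the rank is $\beta$ and not $\beta-1$.
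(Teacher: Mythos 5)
Your proposal is correct and follows essentially the same route as the paper: specialise to $\M=\Spec\KK$, $\M'=C$, transport the relative obstruction theory of Proposition~\ref{PobTh} to the morphism $(\SCS)^\GG_{smp}\to(\VecM^{n,d}\times\BGL_k)^\GG$, use the smoothness of the rigidified base to pass to an absolute theory, and compute the rank as the relative rank $k\bigl(d-n(g-1)\bigr)$ plus the base dimension $n^2(g-1)-k^2+1$. Your write-up in fact makes explicit some steps (the base change along $\BGL_k\times\VecM^{n,d}\to\N$, the descent via Lemma~\ref{Lhi}, and the mapping-cone construction over the smooth base) that the paper invokes only implicitly, and your bookkeeping of the $-k^2$ and $+1$ terms matches the paper's exactly.
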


\begin{proof}
To prove this result we may assume that $\M = \Spec \KK$ and $\M' = C$ (recall that $\M'$ is a family of projective Gorenstein curves over $\M$, as described in Section~\ref{Sstacks}).

Let $\SCS$ be the moduli stack of coherent systems and consider the natural morphism $F : (\SCS)_{smp}^\GG \rightarrow (\VecM^{n,d} \times \BGL_k)^\GG$ (compare Remark~\ref{rStackCS} and Section~\ref{sRigid}). The obstruction theory constructed in Section~\ref{SobTh} naturally induces a perfect relative obstruction theory for the morphism $F$; denote by $r$ its rank. Denote by $\delta$ the dimension of $(\VecM^{n,d} \times \BGL_k)^\GG$.
We need to check that $\beta = r + \delta$.

Since the relative obstruction theory for $F$ is perfect, we can compute its rank at any $\KK$-point. Fix $(V, (C \rightarrow \Spec \KK, E), \varphi : V \otimes \O_C \rightarrow E) \in \SCS(\KK)$; by hypothesis $E$ is a vector bundle of rank $n$ and degree $d$, while $V$ is a vector space of dimension $k$. The rank of an obstruction theory at a point is given by the dimension of the tangent space minus the dimension of the obstruction space at that point, so we have that
\[
r = \dim \Hom(V \otimes \O_C, E) - \dim \Ext^1(V \otimes \O_C, E) \,,
\]
compare Corollary~\ref{cTanOb}. Hence
\[
r = \chi(E^{\oplus k}) = k(d + n(1 -g)) \,.
\]
On the other hand
\[
\delta := \dim (\VecM^{n.d} \times \BGL_k)^\GG = n^2(g-1) - k^2 + 1 \,.
\]
By comparison we deduce that $r + \delta = \beta$.
\end{proof}

Since $\alpha$-stable coherent systems are simple, our computation provides a perfect obstruction theory of rank equal to $\beta$ for every moduli space of $\alpha$-stable coherent systems.

\begin{coro}\label{CobStabCohSys}
Fix $\alpha \in \R$. Let $C$ be a smooth, projective, genus $g$ curve and let $(n,d,k)$ be a suitable triple of positive integers. Let
\[
\beta := \beta(n,d,k) = n^2(g - 1) + 1 - k(k - d + n(g - 1))
\] be the Brill Noether number \cite[2.7]{CohBrNoet}. Then the moduli space of $\alpha$-stable coherent systems of type $(n,d,k)$ has a perfect obstruction theory of rank~$\beta$. \qed
\end{coro}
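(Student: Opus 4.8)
The plan is to exhibit the moduli space of $\alpha$-stable coherent systems as an open subspace of the moduli space of simple coherent systems of Corollary~\ref{CObCohSys}, and then to restrict the perfect obstruction theory constructed there. The rank is already computed in~\ref{CObCohSys}, so no new calculation is required.

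The essential input is the classical fact, due to King and Newstead~\cite{KingNew}, that every $\alpha$-stable coherent system is simple: its automorphism group is exactly the scalars $\GG$. Granting this, and using that $\alpha$-stability is an open condition in flat families, the moduli space $M^\alpha(n,d,k)$ of $\alpha$-stable coherent systems is canonically an open subspace of the moduli space of simple coherent systems of type $(n,d,k)$, which by Corollary~\ref{CObCohSys} we take with $\M = \Spec\KK$ and $\M' = C$. After rigidifying along $\GG$ the $\alpha$-stable locus has trivial stabilizers, so it is an algebraic space, and it agrees with the quasi-projective GIT moduli space of~\cite{KingNew}. Here ``suitable'' refers to the usual numerical conditions on $(n,d,k)$ ensuring that this moduli space is nonempty.

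Once the open embedding $j : M^\alpha(n,d,k) \hookrightarrow (\SCS)_{smp}^\GG$ is in place, I would simply restrict the perfect obstruction theory of Corollary~\ref{CObCohSys} along $j$. Pulling back along an open embedding is exact and preserves both the defining conditions of an obstruction theory ($h^0$ an isomorphism and $h^{-1}$ surjective) and the perfectness of the source complex in amplitude $[-1,0]$; it also leaves the rank unchanged, since the rank is computed fibrewise and equals the locally constant value $\beta$ of~\ref{CObCohSys}. Hence the restriction is a perfect obstruction theory of rank $\beta$ on $M^\alpha(n,d,k)$.

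There is no genuine obstacle here. The only nontrivial ingredient is the simplicity of $\alpha$-stable coherent systems, which is standard, and everything else is the formal behavior of perfect obstruction theories under open restriction. The real content lies in Proposition~\ref{PobTh} and Corollary~\ref{CObCohSys}; the present statement is their immediate geometric consequence.
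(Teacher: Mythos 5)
Your proposal is correct and follows exactly the paper's own route: the paper disposes of this corollary with the single remark that $\alpha$-stable coherent systems are simple, so the rank-$\beta$ perfect obstruction theory of Corollary~\ref{CObCohSys} restricts to the (open) $\alpha$-stable locus. Your write-up merely makes explicit the standard details the paper leaves implicit (openness of $\alpha$-stability in families and the stability of perfect obstruction theories under open restriction), so there is no substantive difference.
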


\newpage

\addcontentsline{toc}{section}{Bibliography}


\begin{thebibliography}{BGPMN03}

\bibitem[ACV03]{acv}
D.~Abramovich, A.~Corti, and A.~Vistoli.
\newblock Twisted bundles and admissible covers.
\newblock {\em Communications in Algebra}, 31(8):3547--3618, 2003.

\bibitem[Art74]{Artin1974}
M.~Artin.
\newblock Versal deformations and algebraic stacks.
\newblock {\em Inventiones mathematicae}, 27(3):165--189, 1974.

\bibitem[Ber94]{Bertram}
A.~Bertram.
\newblock Stable pairs and stable parabolic pairs.
\newblock {\em Journal of Algebraic Geometry}, 3:703--724, 1994.

\bibitem[BF97]{BFantechi}
K.~Behrend and B.~Fantechi.
\newblock The intrinsic normal cone.
\newblock {\em Inventiones mathematicae}, 128(1):45--88, 1997.

\bibitem[BGPMN03]{CohBrNoet}
S.~B. Bradlow, O.~García-Prada, V.~Muñoz, and P.E. Newstead.
\newblock Coherent systems and {B}rill-{N}oether theory.
\newblock {\em International Journal of Mathematics}, 14(7):683--733, 2003.

\bibitem[DM69]{Deligne1969}
P.~Deligne and D.~Mumford.
\newblock The irreducibility of the space of curves of given genus.
\newblock {\em Publications Math{\'e}matiques de l'Institut des Hautes
  {\'E}tudes Scientifiques}, 36(1):75--109, 1969.

\bibitem[Har77]{hartshorne}
R.~Hartshorne.
\newblock {\em Algebraic Geometry}.
\newblock Encyclopaedia of mathematical sciences. Springer, 1977.

\bibitem[He98]{He}
M.~He.
\newblock Espaces de modules de syst\`emes coh\'erents.
\newblock {\em International Journal of Mathematics}, 9(5):545--598, 1998.

\bibitem[Hei10]{Hei2010}
J.~Heinloth.
\newblock Lectures on the moduli stack of vector bundles on a curve.
\newblock In {\em Affine Flag Manifolds and Principal Bundles}, Trends in
  Mathematics. Springer, Basel, 2010.

\bibitem[KN95]{KingNew}
A.D. King and P.E. Newstead.
\newblock Moduli of {B}rill-{N}oether pairs on algebraic curves.
\newblock {\em International Journal of Mathematics}, 6(5):733--748, 1995.

\bibitem[{Le~}93]{LePotier}
J.~{Le~Potier}.
\newblock {\em {Systemes coherents et structures de niveau}}.
\newblock Asterisque 214, 1993.

\bibitem[Ols16]{Olsson}
M.~Olsson.
\newblock {\em Algebraic Spaces and Stacks}.
\newblock Colloquium Publications. American Mathematical Society, 2016.

\bibitem[RV94]{raghavendra}
N.~Raghavendra and P.A. Vishwanath.
\newblock Moduli of pairs and generalized theta divisors.
\newblock {\em Tohoku Mathematical Journal (2)}, 46(3):321--340, 1994.

\bibitem[Sca17]{Tesi}
Giorgio Scattareggia.
\newblock {\em A perfect obstruction theory for moduli of coherent systems}.
\newblock PhD thesis, SISSA, http://hdl.handle.net/20.500.11767/54164, jul
  2017.

\bibitem[{Sta}17]{stacks-project}
The {Stacks Project Authors}.
\newblock {\itshape Stacks Project}.
\newblock \url{http://stacks.math.columbia.edu}, 2017.

\end{thebibliography}
\end{document}